\documentclass[a4paper,11pt,oneside]{amsart}
\usepackage[left=2.7cm,right=2.7cm,top=3.5cm,bottom=3cm]{geometry}

\usepackage[all]{xy}
\usepackage{amscd}
\usepackage{amsfonts}
\usepackage{amssymb}
\usepackage{amsmath}
\usepackage{latexsym}
\usepackage[usenames,dvipsnames]{color}

\usepackage[notref,notcite]{showkeys}

\numberwithin{equation}{section}

\newtheorem{thm}{Theorem}[section]
\newtheorem{cor}[thm]{Corollary}
\newtheorem{lem}[thm]{Lemma}
\newtheorem{prop}[thm]{Proposition}

\theoremstyle{definition}
\newtheorem{df}[thm]{Definition}
\newtheorem{rem}[thm]{Remark}
\newtheorem{rems}[thm]{Remarks}
\newtheorem{ass}[thm]{Assumption}

\newcommand{\N}{\mathbb{N}}
\newcommand{\Z}{\mathbb{Z}}

\newcommand{\F}{\mathbb{F}}
\newcommand{\G}{\Gamma}
\newcommand{\g}{\gamma}

\renewcommand{\L}{\Lambda}

\newcommand{\Gal}{\textrm{Gal}}
\def\Ker{\mathrm{Ker}\,}
\def\Coker{\mathrm{Coker}\,}

\newcommand{\calf}{\mathcal F}
\newcommand{\calg}{\mathcal G}
\newcommand{\calh}{\mathcal H}

\newcommand{\call}{\mathcal L}

\newcommand{\cals}{\mathcal S}

\newcommand{\calx}{\mathcal X}

\newcommand{\pr}{\mathfrak p}

\newcommand{\sri}{\twoheadrightarrow}
\newcommand{\iri}{\hookrightarrow}

\newcommand{\wt}{\widetilde}

\renewcommand{\geq}{\geqslant}

\newcommand{\dl}[1]{\lim_{\buildrel \longrightarrow\over{#1}}}
\newcommand{\il}[1]{\lim_{\buildrel \longleftarrow\over{#1}}}

\title{Characteristic ideals and Selmer groups}

\author[A. Bandini] {Andrea Bandini}
\address{Dipartimento di Matematica e Informatica, Universit\`a degli Studi di Parma\\
Parco Area delle Scienze, 53/A - 43124 Parma (PR), Italy}
\email{andrea.bandini@unipr.it}

\author[F. Bars] {Francesc Bars}
\address{Departament Matem\`atiques, Edif. C, Universitat Aut\`onoma de Barcelona\\
08193 Bellaterra, Catalonia}
\email{francesc@mat.uab.cat}
\thanks{F. Bars supported by MTM2013-40680-P}

\author[I. Longhi] {Ignazio Longhi}
\address{Department of Mathematical Sciences,
Xi'an Jiaotong-Liverpool University \\
111 Ren Ai Road,
Dushu Lake Higher Education Town\\
Suzhou Industrial Park, Suzhou, Jiangsu\\
215123, China} \email{Ignazio.Longhi@xjtlu.edu.cn}

\keywords{Characteristic ideals; Iwasawa theory; Selmer groups}

\subjclass[2010]{11R23; 11G35}

\begin{document}

\begin{abstract}
Let $A$ be an abelian variety defined over a global field $F$ of positive characteristic $p$ and let $\calf/F$ be a
$\Z_p^{\N}$-extension, unramified outside a finite set of places of $F$. Assuming that all ramified places are
totally ramified, we define a pro-characteristic ideal associated to the Pontrjagin dual of the $p$-primary Selmer group
of $A$, in order to formulate an Iwasawa Main Conjecture for the non-noetherian commutative Iwasawa algebra
$\Z_p[[\Gal(\calf/F)]]$ (which we also prove for a constant abelian variety). To do this we first show the relation
between the characteristic ideals of duals of Selmer groups for a $\Z_p^d$-extension $\calf_d/F$ and for any
$\Z_p^{d-1}$-extension contained in $\calf_d\,$, and then use a limit process.
\end{abstract}

\maketitle

\section{Introduction}
Let $F$ be a global function field of characteristic $p$ and $\calf/F$ a $\Z_p^\N$-extension unramified outside
a finite set of places. We take an abelian variety $A$ defined over $F$ and let $S_A$ be a finite set of
places of $F$ containing exactly the ramified primes and the primes of bad reduction for $A$.
For any extension $v$ of some place of $F$ to the algebraic closure $\overline{F}$ and for any finite extension
$E/F$, we denote by $E_v$ the completion of $E$ with respect to $v$ and, if $\call/F$ is infinite,
we put $\call_v:=\cup E_v$, where the union is taken over all finite subextensions of $\call$.
We define the $p$-part of the {\em Selmer group} of $A$ over $E$ as
\[ Sel(E):=Sel_A(E)_p:=Ker\left\{ H^1_{fl}(X_E,A[p^\infty])\longrightarrow \prod_v H^1_{fl}(X_{E_v},A)[p^\infty] \right\} \]
(where the map is the product of the natural restrictions at all places $v$ of $E$, $X_E:=Spec(E)$ and $H^1_{fl}$ denotes
flat cohomology). For infinite algebraic extensions we define the Selmer groups by taking direct limits on all the finite subextensions.
For any algebraic extension $K/F$, let $\cals(K)$ denote the Pontrjagin dual of $Sel(K)$
(other Pontrjagin duals will be indicated by the symbol $\,^\vee\,$).

For any infinite Galois extension $\call/F$, let $\L(\call):=\Z_p[[\Gal(\call/F)]]$ be the associated Iwasawa algebra:
we recall that, if $\Gal(\call/F)\simeq\Z_p^d\,$, then $\L(\call)\simeq\Z_p[[t_1,..,t_d]]$ is a Krull domain. We are interested
in the definition of the analogue of a characteristic ideal in $\L(\calf)$ for $\cals(\calf)$
(a similar result providing a {\em pro-characteristic ideal} for the Iwasawa module of class groups
is described in \cite{BBL2}).

If $R$ is a noetherian Krull domain and $M$ a finitely generated torsion $R$-module, the structure
theorem for $M$ provides an exact sequence
\begin{equation} \label{e:1} 0 \longrightarrow P \longrightarrow M \longrightarrow \bigoplus_{i=1}^n R/\pr_i^{e_i} R
\longrightarrow Q \longrightarrow 0 \end{equation}
where the $\pr_i$'s are height 1 prime ideals of $R$ and $P$ and $Q$ are pseudo-null $R$-modules (i.e., torsion modules
with annihilator of height at least 2). With this sequence one defines the {\em characteristic ideal} of $M$ as
\[ Ch_R(M):= \prod_{i=1}^n \pr_i^{e_i} \]
(if $M$ is not torsion, we put $Ch_R(M)=0$ and if $M$ is pseudo-null, then $Ch_R(M)=(1)\,$).

\noindent In commutative Iwasawa theory characteristic ideals provide the algebraic counterpart for the $p$-adic $L$-functions
associated to Iwasawa modules (such as duals of Selmer groups). We fix a $\Z_p$-basis $\{\gamma_i\}_{i\in\N}$ for $\G$
and for any $d\geq 0$ we let $\calf_d\subset\calf$  be the fixed field of $\{\gamma_i\}_{i>d}\,$. The correspondence
$\gamma_i-1 \leftrightarrow t_i$ provides an isomorphism between the subring $\L(\calf_d)$ of $\L(\calf)$
and $\Z_p[[t_1,\dots, t_d]]\,$: by a slight abuse of notation, the two shall be identified in this paper. In particular, for any
$d\geq 1$ we have $\L(\calf_d)=\L(\calf_{d-1})[[t_d]]$ (we remark that the isomorphism is uniquely determined once
the $\g_i$ have been fixed, but we allow complete freedom in their initial choice). Our goal is to define an element
in $\L(\calf)$ associated to $\cals(\calf)$ via a limit of the $Ch_{\L(\calf_d)}(\cals(\calf_d))$.

Let $\pi^d_{d-1}$ be the canonical projection $\L(\calf_d)\rightarrow\L(\calf_{d-1})$ with kernel
$I^d_{d-1}=(t_d)$ and put $\G^d_{d-1}:=\Gal(\calf_d/\calf_{d-1})$. In order to define a pro-characteristic ideal for
the non-noetherian Iwasawa algebra $\L(\calf)$, we need to study the relation between
$\pi^{d}_{d-1}(Ch_{\L(\calf_d)}(S(\calf_d)))$ and $Ch_{\L(\calf_{d-1})}(S(\calf_{d-1}))$. A general
technique to deal with this type of descent and ensure that the limit does not depend on the filtration has been
described in \cite[Theorem 2.13]{BBL2}. That theorem is based on a generalization of some results of \cite[Section 3]{Oc}
(which directly apply to our algebras $\L(\calf_d)$, even without the generalization to Krull domains provided in \cite{BBL2})
and can be applied to the $\L(\calf)$-module $\cals(\calf)=\displaystyle{\il{d} \cals(\calf_d)}$.
In our setting \cite[Theorem 2.13]{BBL2} reads as follows

\begin{thm}\label{IntroThmBBL2}
If, for any $d \gg 1$, \begin{itemize}
\item[{\bf 1.}] the $t_d$-torsion submodule of $\cals(\calf_d)$ is a pseudo-null $\L(\calf_{d-1})$-module, i.e.,
\[ Ch_{\L(\calf_{d-1})}(\cals(\calf_d)_{t_d})=Ch_{\L(\calf_{d-1})}(\cals(\calf_d)^{\G^d_{d-1}})=(1)\ ;\]
\item[{\bf 2.}] $Ch_{\L(\calf_{d-1})}(\cals(\calf_d)/{t_d})=Ch_{\L(\calf_{d-1})}(\cals(\calf_d)/I^d_{d-1})
\subseteq Ch_{\L(\calf_{d-1})}(\cals(\calf_{d-1}))$,
\end{itemize}
then we can define a {\em pro-characteristic ideal} for $\cals(\calf)$ as
\[ \wt{Ch}_{\L(\calf)}(\cals(\calf)):= \il{d}\, (\pi^{\L(\calf)}_{\L(\calf_d)})^{-1}(Ch_{\L(\calf_d)}(\cals(\calf_d))\in \L(\calf) \]
(where $\pi^{\L(\calf)}_{\L(\calf_d)}\,: \L(\calf)\rightarrow \L(\calf_d)$ is the natural projection).
\end{thm}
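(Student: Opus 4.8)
The plan is to read the statement as a descent-and-limit result. First I would recall that $\calf=\bigcup_d\calf_d$ (every element of $\calf$ is fixed by $\Gal(\calf/\calf_d)$ for $d$ large) and that the Selmer group over a union of fields is the direct limit of the Selmer groups at finite level; hence, by Pontrjagin duality, $\cals(\calf)=\il{d}\cals(\calf_d)$ as $\L(\calf)$-modules, which is what makes the right-hand side of the definition meaningful. The whole proof then reduces to a single containment: it suffices to show that, for all $d\gg 1$,
\[ \pi^d_{d-1}\bigl(Ch_{\L(\calf_d)}(\cals(\calf_d))\bigr)\ \subseteq\ Ch_{\L(\calf_{d-1})}(\cals(\calf_{d-1}))\,. \]
Indeed, setting $J_d:=(\pi^{\L(\calf)}_{\L(\calf_d)})^{-1}\bigl(Ch_{\L(\calf_d)}(\cals(\calf_d))\bigr)$ and using $\pi^{\L(\calf)}_{\L(\calf_{d-1})}=\pi^d_{d-1}\circ\pi^{\L(\calf)}_{\L(\calf_d)}$, this containment yields $J_d\subseteq J_{d-1}$ for all $d\gg 1$; the inclusions then form an inverse system whose limit $\bigcap_{d\gg 1}J_d$ is a well-defined ideal of $\L(\calf)$, and that is $\wt{Ch}_{\L(\calf)}(\cals(\calf))$.

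To establish the containment I would fix $d\gg 1$ and abbreviate $\L:=\L(\calf_{d-1})$, $R:=\L(\calf_d)=\L[[t_d]]$, $M:=\cals(\calf_d)$. Starting from the exact sequence $0\to M[t_d]\to M\xrightarrow{t_d}M\to M/t_dM\to 0$, the identification $M[t_d]=M^{\G^d_{d-1}}=\cals(\calf_d)_{t_d}$, and the structure sequence \eqref{e:1} for finitely generated torsion modules over the Krull domain $R$ (which here is in fact regular), I would invoke the descent identity of \cite[Section 3]{Oc}, generalized to Krull domains in \cite{BBL2}: if the $t_d$-torsion $M[t_d]$ is pseudo-null over $\L$, then $M/t_dM$ is a finitely generated torsion $\L$-module and
\[ \pi^d_{d-1}\bigl(Ch_R(M)\bigr)=Ch_\L(M/t_dM)\,. \]
The subtle point here is that, in general, the reduction modulo $t_d$ of an $R$-pseudo-null module need not be $\L$-pseudo-null, and it is precisely pseudo-nullity of $M[t_d]$ that forces the error terms $P,Q$ of \eqref{e:1} to contribute trivially to $Ch_\L(M/t_dM)$. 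Now hypothesis {\bf 1} is exactly this pseudo-nullity, so it provides the identity above, while hypothesis {\bf 2} reads $Ch_\L(M/t_dM)\subseteq Ch_{\L(\calf_{d-1})}(\cals(\calf_{d-1}))$; chaining the two gives the required containment.

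I expect the real work to be concentrated in that descent identity — the bookkeeping of pseudo-null modules under reduction modulo $t_d$ — and this is what \cite[Section 3]{Oc} and \cite[Theorem 2.13]{BBL2} carry out, so in practice I would cite it rather than reprove it. What then remains to be verified in the present setting is only that each $\cals(\calf_d)$ is a finitely generated torsion $\L(\calf_d)$-module, so that the characteristic ideals occurring in {\bf 1} and {\bf 2} are defined, together with the compatibility $\cals(\calf)=\il{d}\cals(\calf_d)$, which is formal. Finally, that the ideal so obtained does not depend on the initial choice of $\Z_p$-basis $\{\g_i\}$, hence is genuinely an invariant of $\cals(\calf)$, is the more delicate half of \cite[Theorem 2.13]{BBL2}: one interlaces the towers attached to two bases and applies the descent identity transitively.
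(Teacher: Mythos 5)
Your proposal is correct and matches the paper's approach: the paper simply invokes \cite[Theorem 2.13]{BBL2} for this statement, and your sketch reconstructs exactly the mechanism behind it --- the Ochiai-type descent identity $Ch_{\L(\calf_{d-1})}(\cals(\calf_d)^{\G^d_{d-1}})\cdot\pi^d_{d-1}(Ch_{\L(\calf_d)}(\cals(\calf_d)))=Ch_{\L(\calf_{d-1})}(\cals(\calf_d)/I^d_{d-1})$ (the paper's \eqref{CharIdSel1}), hypothesis {\bf 1} killing the left factor, hypothesis {\bf 2} supplying the inclusion, and the resulting nested chain of preimage ideals defining the inverse limit. Your final remark on independence from the choice of filtration via interlacing towers is likewise the argument the paper gives after Definition \ref{CharIdSel}.
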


In Section 2 we show that if $\cals(\calf_e)$ is $\L(\calf_e)$-torsion, then $\cals(\calf_d)$ is
$\L(\calf_d)$-torsion for any $d\geq e$ and use \cite[Proposition 2.10]{BBL2} to provide a general relation
\begin{equation}\label{DescentEq}
Ch_{\L(\calf_{d-1})}(\cals(\calf_d)^{\G^d_{d-1}})\cdot\pi^d_{d-1}(Ch_{\L(\calf_d)}(\cals(\calf_d))) =
Ch_{\L(\calf_{d-1})} (\cals(\calf_{d-1})) \cdot \theta^d_{d-1}
\end{equation}
(see \eqref{CharIdSel2} where the extra factor $\theta^d_{d-1}$ is more explicit). Then we move to the
totally ramified setting, i.e., extensions in which all ramified primes are assumed to be totally ramified.
An example are the extensions obtained from $F$ by adding the
$\mathfrak{a}^n$-torsion points of a normalized rank 1 Drinfeld module over $F$. In this setting, adapting some
techniques and results of K.-S. Tan (\cite{Tan11}), we check the hypotheses of Theorem \ref{IntroThmBBL2}
using equation \eqref{DescentEq}, and obtain (see Corollary \ref{CorTan} and Definition \ref{CharIdSel})

\begin{thm}\label{IntroThm}
Assume all ramified primes in $\calf/F$ are totally ramified, then for any $d \gg 0$
\[ \pi^d_{d-1} (Ch_{\L(\calf_d)}(\cals(\calf_d))) =
(p^\nu)\cdot Ch_{\L(\calf_{d-1})}(\cals(\calf_{d-1}))  \]
for some $\nu\geq 0$ (if no unramified place $v\in S_A$ splits completely in $\calf$, then $\nu=0$),
and the pro-characteristic ideal
\[ \wt{Ch}_{\L(\calf)}(\cals(\calf)):= \il{d}\, (\pi^{\L(\calf)}_{\L(\calf_d)})^{-1}(Ch_{\L(\calf_d)}(\cals(\calf_d)) \]
is well defined.
\end{thm}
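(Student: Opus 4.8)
The plan is to verify the two hypotheses of Theorem~\ref{IntroThmBBL2} for the tower $\{\cals(\calf_d)\}_{d}$ and then to make the factor $\theta^d_{d-1}$ of \eqref{DescentEq} explicit; the displayed formula and the well-definedness of the pro-characteristic ideal will follow at once. First I would reduce to the torsion case: by the result of Section~2, if $\cals(\calf_e)$ is $\L(\calf_e)$-torsion for some $e$ then $\cals(\calf_d)$ is $\L(\calf_d)$-torsion for all $d\geq e$, so we may assume this (the complementary case being trivial, all characteristic ideals vanishing). Next, from the control theorem for $Sel$ along $\calf_d/\calf_{d-1}$ --- or directly from \eqref{DescentEq}, since $\theta^d_{d-1}\neq(0)$ --- one gets that $\cals(\calf_d)/t_d=\cals(\calf_d)_{\G^d_{d-1}}$ is $\L(\calf_{d-1})$-torsion; as $t_d$ is a prime element of $\L(\calf_d)=\L(\calf_{d-1})[[t_d]]$, this forces $t_d\nmid Ch_{\L(\calf_d)}(\cals(\calf_d))$.

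The heart of the argument is Hypothesis~1: that $\cals(\calf_d)^{\G^d_{d-1}}=\cals(\calf_d)[t_d]$ is a pseudo-null $\L(\calf_{d-1})$-module for $d\gg0$. I would obtain it from the stronger vanishing $\cals(\calf_d)[t_d]=0$, i.e.\ $Sel(\calf_d)_{\G^d_{d-1}}=0$. Since $t_d\nmid Ch_{\L(\calf_d)}(\cals(\calf_d))$, the finitely generated submodule $\cals(\calf_d)[t_d^\infty]\subseteq\cals(\calf_d)$ has $\L(\calf_d)$-characteristic ideal a power of $(t_d)$ dividing $Ch_{\L(\calf_d)}(\cals(\calf_d))$, hence equal to $(1)$: it is a pseudo-null $\L(\calf_d)$-submodule of $\cals(\calf_d)$, and so it vanishes provided $\cals(\calf_d)$ has no nonzero pseudo-null $\L(\calf_d)$-submodule. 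Establishing that last property for $d\gg0$ is the crux, and it is where the total ramification hypothesis and K.-S.\ Tan's techniques (\cite{Tan11}) enter: the contributions of the ramified places of $S_A$ to the relevant control sequences get trivialized by total ramification, while ``$d\gg0$'' is needed to absorb the finitely many places of $S_A$ that are only finitely --- but not completely --- decomposed at low levels. Granting this, $\cals(\calf_d)^{\G^d_{d-1}}=0$, so $Ch_{\L(\calf_{d-1})}(\cals(\calf_d)^{\G^d_{d-1}})=(1)$.

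Hypothesis~2 will then be automatic. The Ochiai-type descent lemma of \cite[Section~3]{Oc} and \cite[Prop.~2.10]{BBL2}, applied to $\cals(\calf_d)$ using $t_d\nmid Ch_{\L(\calf_d)}(\cals(\calf_d))$ and $Ch_{\L(\calf_{d-1})}(\cals(\calf_d)[t_d])=(1)$, gives $\pi^d_{d-1}(Ch_{\L(\calf_d)}(\cals(\calf_d)))=Ch_{\L(\calf_{d-1})}(\cals(\calf_d)/t_d)$; combining this with \eqref{DescentEq}, whose first factor is now trivial, yields
\[ Ch_{\L(\calf_{d-1})}(\cals(\calf_d)/t_d)=\pi^d_{d-1}(Ch_{\L(\calf_d)}(\cals(\calf_d)))=\theta^d_{d-1}\cdot Ch_{\L(\calf_{d-1})}(\cals(\calf_{d-1}))\subseteq Ch_{\L(\calf_{d-1})}(\cals(\calf_{d-1})). \]
Thus both hypotheses of Theorem~\ref{IntroThmBBL2} hold for $d\gg0$, and that theorem then produces the pro-characteristic ideal $\wt{Ch}_{\L(\calf)}(\cals(\calf))\in\L(\calf)$, independent of the chosen $\Z_p$-basis $\{\gamma_i\}$.

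It remains to evaluate $\theta^d_{d-1}$. Here I would use its explicit description \eqref{CharIdSel2} as a product of local terms attached to the control map: by total ramification the terms at the ramified places of $S_A$ are trivial, and for $d\gg0$ the surviving ones are supported exactly on the unramified $v\in S_A$ that split completely in $\calf$, each contributing --- by Tan's local computation --- a finite $p$-group (tied to the component group of $A$ at $v$). Hence $\theta^d_{d-1}=(p^\nu)$ for some $\nu\geq0$, with $\nu=0$ precisely when no unramified $v\in S_A$ splits completely in $\calf$; plugging this into the displayed identities gives $\pi^d_{d-1}(Ch_{\L(\calf_d)}(\cals(\calf_d)))=(p^\nu)\cdot Ch_{\L(\calf_{d-1})}(\cals(\calf_{d-1}))$, as claimed. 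The hard part, throughout, is the import and adaptation of Tan's function-field arguments from \cite{Tan11} --- the absence of nonzero pseudo-null $\L(\calf_d)$-submodules underpinning Hypothesis~1, the trivialization of the ramified local contributions, and the bookkeeping that identifies $\theta^d_{d-1}$.
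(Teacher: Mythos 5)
The overall skeleton of your argument — reduce to the torsion case, verify Hypotheses 1 and 2 of Theorem~\ref{IntroThmBBL2}, and then identify the error term $\theta^d_{d-1}$ as $(p^\nu)$ with $\nu$ controlled by the unramified places of $S_A$ that split completely — matches the paper's strategy (Proposition~\ref{FinGenSel}, Theorem~\ref{CharIdCoker}, Theorem~\ref{PNullThm}, Corollary~\ref{CorTan}). However, there is a genuine gap in your treatment of Hypothesis~1. You propose to prove the \emph{vanishing} $\cals(\calf_d)^{\G^d_{d-1}}=\cals(\calf_d)[t_d]=0$ by first noting that $\cals(\calf_d)[t_d^\infty]$ is a pseudo-null $\L(\calf_d)$-submodule and then invoking the property that ``$\cals(\calf_d)$ has no nonzero pseudo-null $\L(\calf_d)$-submodule.'' That property is nowhere established, you do not indicate a route to it, and it is not what Tan's techniques give; in fact the paper never claims it, and it is considerably stronger than what is needed or proved. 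The paper only needs, and only proves, that $\cals(\calf_d)^{\G^d_{d-1}}$ is pseudo-null \emph{over $\L(\calf_{d-1})$} — which is not a formal consequence of pseudo-nullity over $\L(\calf_d)$ (e.g.\ $\L(\calf_d)/(t_d,f)$ with $f$ an irreducible element of $\L(\calf_{d-1})$ is pseudo-null over $\L(\calf_d)$ but has characteristic ideal $(f)\neq(1)$ over $\L(\calf_{d-1})$). Getting around this without the extra ``no pseudo-null submodule'' input is precisely why you would need full vanishing, and that claim is left as a black box.

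The paper's Theorem~\ref{PNullThm} achieves pseudo-nullity over $\L(\calf_{d-1})$ by a different and direct cohomological route, which you should aim for instead: Lemma~\ref{Surjgamma-1} shows $H^1_{fl}(\calx_d,A[p^\infty])=(\g-1)H^1_{fl}(\calx_d,A[p^\infty])$; the snake lemma on the two diagrams in \eqref{DiagTan1} then produces the exact sequence \eqref{Tan42}
\[
Sel(\calf_d)/(\g-1)\ \iri\ Coker(\phi^{\G^d_{d-1}})\ \sri\ Coker(\phi)^{\G^d_{d-1}}\,,
\]
and a further snake lemma with the Hochschild--Serre sequences yields \eqref{EqCoker}
\[
Coker(\phi_{d-1})\ \longrightarrow\ Coker(\phi^{\G^d_{d-1}})\ \longrightarrow\ \calh^2(\G^d_{d-1},A)\,.
\]
One then shows $\calh^2(\G^d_{d-1},A)=0$ from $cd_p(\Z_p)=1$, and that $Coker(\phi_{d-1})$ is pseudo-null over $\L(\calf_{d-1})$ by factoring the local restriction map through the Kummer map and using the Cassels--Tate type dual sequence of Gonz\'alez-Avil\'es and Tan, so that $Coker(\lambda_{d-1})^\vee\simeq T_p(Sel_{A^t}(\calf_{d-1})_p)$ embeds into a finitely generated $\Z_p$-module. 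This yields pseudo-nullity of $Sel(\calf_d)/(\g-1)$ over $\L(\calf_{d-1})$ for $d\geq 3$, which is exactly Hypothesis~1; vanishing is neither needed nor claimed. Your treatment of Hypothesis~2 and of the factor $\theta^d_{d-1}=Ch_{\L(\calf_{d-1})}((\Coker a^d_{d-1})^\vee)$ via the local terms of Lemma~\ref{KercTor} (trivial at ramified places by total ramification, finite $p$-groups at unramified $v\in S_A$ that split completely, trivial for $d\gg 0$ at those that are inert at some level) is consistent with Theorem~\ref{CharIdCoker} and is fine.
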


As an application, we use a deep result of Lai - Longhi - Tan - Trihan \cite{LLTT} to show that such limit is the algebraic counterpart
of a $p$-adic $L$-function in the non-noetherian Iwasawa Main Conjecture for constant abelian varieties
(see Theorem \ref{FinalThm}).

\section{General $\Z_p$-descent for Selmer groups}\label{SelGrSec}
To be able to define characteristic ideals we need the following

\begin{thm}\label{SelStruct}
Assume that $A$ has good ordinary or split multiplicative reduction at all primes of the finite set $S_A$. Then,
for any $d$ and any $\Z_p^d$-extension $\call/F$ contained in $\calf$, the group $\cals(\call)$ is a finitely
generated $\L(\call)$-module.
\end{thm}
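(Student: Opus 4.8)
The plan is to reduce the statement to a control theorem plus Nakayama's lemma. First I would recall that $\cals(\call)$ is, by definition, the Pontrjagin dual of the Selmer group $Sel(\call) = \varinjlim Sel(\call')$, where $\call'$ runs over the finite subextensions of $\call/F$. Since $\call/F$ is a $\Z_p^d$-extension, $\L(\call) \simeq \Z_p[[t_1,\dots,t_d]]$ is noetherian, and by the topological Nakayama lemma it suffices to show that $\cals(\call)/\gotm\cals(\call)$ is finite, where $\gotm = (p, t_1,\dots,t_d)$ is the maximal ideal of $\L(\call)$. Dually, this amounts to showing that $Sel(\call)[\gotm]$, i.e. the submodule of elements killed by $p$ and by all $\gamma_i - 1$, is finite; equivalently (up to controlling the $p$-torsion, which is harmless since $Sel$ is $p$-primary) that $Sel(\call)^{\Gal(\call/F)}[p]$ is finite.

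The heart of the argument is therefore a control theorem comparing $Sel(\call)^{\Gal(\call/F)}$ with $Sel(F)$ (or with $Sel$ over a suitable finite layer). I would proceed inductively on $d$, or directly, using the inflation-restriction sequence in flat cohomology: for each finite layer $E/F$ inside $\call$ one has
\begin{equation}\label{e:infres}
0 \longrightarrow H^1_{fl}(\Gal(E/F), A[p^\infty](E)) \longrightarrow H^1_{fl}(X_F, A[p^\infty]) \longrightarrow H^1_{fl}(X_E, A[p^\infty])^{\Gal(E/F)}
\end{equation}
and similarly locally at each place. Passing to the limit over $E$ and chasing the resulting diagram with the snake lemma (the standard Mazur-style argument, adapted to the function-field flat-cohomology setting as in work of Ochiai and of K.-S. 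Tan), one gets that the kernel and cokernel of the restriction map $Sel(F) \to Sel(\call)^{\Gal(\call/F)}$ are controlled by: (i) $H^1$ of $\Gal(\call/F)$ with values in $A[p^\infty](\call)$, which is finite (indeed the group $A[p^\infty](\call)$ is finite in the function-field setting, being contained in the torsion of $A$ over a field finitely generated over $\F_p$, or at worst cofinitely generated), and (ii) local factors $\prod_v H^1(\Gal(\call_v/F_v), \cdot)$, which are trivial or finite at all but finitely many places and at the ramified/bad places $v \in S_A$ contribute a finite group precisely because of the good-ordinary/split-multiplicative hypothesis — this is where one uses that the local condition defining the Selmer group is a well-behaved (cofinitely generated, corank-controlled) subgroup, via the explicit description of $H^1_{fl}(X_{E_v}, A)[p^\infty]$ in terms of the connected-étale sequence of the Néron model.

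From this control statement one concludes that $Sel(\call)^{\Gal(\call/F)}$ differs from the finite group $Sel(F)$ by finite kernel and cokernel, hence is itself finite, and \emph{a fortiori} $Sel(\call)^{\Gal(\call/F)}[p]$ is finite; dualizing and invoking Nakayama gives that $\cals(\call)$ is finitely generated over $\L(\call)$. I expect the main obstacle to be the local analysis at the places in $S_A$: verifying that under good ordinary reduction (where one must split off the toric/multiplicative part of $A[p^\infty]$ over the completion and handle the Frobenius action on the étale quotient) and under split multiplicative reduction (where one uses the Tate-curve-type uniformization and the fact that the splitting makes the relevant local cohomology groups finitely controlled) the local cokernels are indeed finite. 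The remaining steps — the inflation-restriction diagram chase and the Nakayama reduction — are formal once the finiteness of $A[p^\infty](\call)$ and of the local terms is in hand; I would cite the analogous computations in \cite{Tan11} and \cite{Oc} to keep these routine verifications brief.
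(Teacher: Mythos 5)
The paper itself does not prove Theorem \ref{SelStruct}: it simply attributes the result to Tan \cite{Tan} and refers to \cite[Section 2]{BBL} for a survey. Your sketch reproduces the standard control-theorem-plus-topological-Nakayama strategy, which is indeed the approach one finds in Tan's work and in the analogous number-field arguments going back to Mazur, so the \emph{route} is the right one and is in the spirit of what the cited reference actually does.

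There is, however, a genuine misstep in the concluding paragraph. You assert that ``$Sel(\call)^{\Gal(\call/F)}$ differs from the finite group $Sel(F)$ by finite kernel and cokernel, hence is itself finite.'' But $Sel(F)$ is \emph{not} assumed finite in this theorem, and in general it is not: its $\Z_p$-corank is at least the rank of the Mordell--Weil group $A(F)$. (Indeed, finiteness of $Sel(F)$ appears later in the paper, in the remarks after Assumption \ref{AssSel}, as a \emph{separate} sufficient hypothesis for the stronger torsion property, precisely because it is not automatic.) Fortunately the argument survives at a weaker level of precision: the Nakayama step needs only that $\cals(\call)/\gotm\cals(\call)$ be finite, i.e.\ that $(Sel(\call)^{\Gal(\call/F)})[p]$ be finite, equivalently that $Sel(\call)^{\Gal(\call/F)}$ be $\Z_p$-cofinitely generated. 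Since $Sel(F)$ is always $\Z_p$-cofinitely generated (via the exact sequence relating it to $A(F)\otimes\Q_p/\Z_p$ and to the $p$-primary Tate--Shafarevich group, whose $p$-torsion is finite), it suffices that the restriction map $Sel(F)\to Sel(\call)^{\Gal(\call/F)}$ have $\Z_p$-cofinitely generated --- rather than finite --- kernel and cokernel, and that is what the local analysis under the good-ordinary / split-multiplicative hypothesis actually delivers. The same caveat applies to your parenthetical claim that $A[p^\infty](\call)$ is finite: you correctly hedge with ``or at worst cofinitely generated,'' and it is the latter that holds in general (the $p$-rank of $A$ may be positive and $\call$ may capture infinitely many \'etale $p$-power torsion points). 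With these corrections your proposal matches the intended argument.
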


\begin{proof}
In this form the theorem is due to Tan (\cite{Tan}). See also \cite[Section 2]{BBL} and the references there.
\end{proof}

\noindent In order to obtain a nontrivial relation between the characteristic ideals, we
need something more than just Theorem \ref{SelStruct}, so we make the following

\begin{ass}\label{AssSel} There exists an $e>0$ such that $\cals(\calf_e)$ is a torsion $\L(\calf_e)$-module.
\end{ass}

\begin{rems}
\begin{itemize}  \item[{}]
\item[{\bf 1.}] This hypothesis is satisfied in many cases: for example when $\calf_e$ contains the arithmetic $\Z_p$-extension of
$F$ (proof in \cite[Theorem 2]{Tan11}, extending \cite[Theorem 1.7]{OT}) or when $Sel(F)$ is finite and $A$ has good
ordinary reduction at all places which ramify in $\calf_e/F$ (easy consequence of \cite[Theorem 4]{Tan}).
\item[{\bf 2.}] Our goal is an equation relating $\pi^d_{d-1}(Ch_{\L(\calf_d)}\cals(\calf_d))$ and the characteristic ideal of
$\cals(\calf_{d-1})$. If the above assumption is not satisfied for any $e$, then all characteristic ideals
are 0 and there is nothing to prove.
\end{itemize}
\end{rems}

In this section we also assume that none of the ramified prime has trivial decomposition group in $\Gal(\calf_1/F)$.
In Section \ref{pramSel} we shall work in extensions in which ramified places are totally ramified, so this assumption
will be automatically verified. Anyway this is not restrictive in general because of the following

\begin{lem} \label{RamLemma}
If $d>2$, one can always find a $\Z_p$-subextension $\calf_1/F$ of $\calf_d/F$ in which none of the ramified places
splits completely.
\end{lem}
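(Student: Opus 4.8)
The plan is to view the problem through the decomposition groups of the ramified places inside $\G=\Gal(\calf/F)\simeq\Z_p^\N$. Let $w_1,\dots,w_r$ be the (finitely many) ramified places of $F$ in $\calf/F$, and for each $j$ let $D_j\subseteq\G$ be the decomposition group of a fixed extension of $w_j$ to $\calf$; since $w_j$ ramifies, $D_j$ is a nontrivial closed subgroup of $\Z_p^\N$, hence contains a nonzero element, and in particular its image in $\G/p\G\simeq\F_p^\N$ (an $\F_p$-vector space) is a nonzero subspace. A place $w_j$ splits completely in a $\Z_p$-extension $\calf_1/F$ precisely when $D_j$ is contained in $\Gal(\calf/\calf_1)$, i.e.\ when $D_j$ maps to $0$ under the projection $\G\sri\Gal(\calf_1/F)$. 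So I want to produce a continuous surjection $\varphi\colon\G\sri\Z_p$, obtained by restricting to a $d$-dimensional piece of the chosen basis, such that $\varphi(D_j)\neq 0$ for all $j=1,\dots,r$.

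The first step is to reduce this to a statement about a finite-dimensional quotient. Choose $d>2$ (as in the hypothesis) and consider $\Gal(\calf_d/F)\simeq\Z_p^d$; I claim that for $d\geq 2$ one can already arrange the splitting behaviour there, but the hypothesis $d>2$ gives extra room. Let $\bar D_j$ be the image of $D_j$ in $V:=\Gal(\calf_d/F)\otimes\F_p\simeq\F_p^d$. If some $\bar D_j$ were $0$ this would not be immediately a problem, so instead I first pass to a quotient where each decomposition group survives: since each $D_j\neq 0$ in $\Z_p^\N$, after possibly enlarging the chosen initial basis $\{\g_i\}$ — which the paper explicitly allows ("we allow complete freedom in their initial choice") — we may assume each $\bar D_j\neq 0$ in $V$. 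Now the set of surjections $\varphi\colon V\sri\F_p$ killing a fixed nonzero $\bar D_j$ forms a proper subspace of the dual $V^\vee$ (of codimension $\geq 1$, in fact exactly the annihilator of the line spanned by any nonzero vector of $\bar D_j$), hence has at most $p^{d-1}$ elements; a nonzero $\varphi$ not killing any $\bar D_j$ exists as long as the union of these $r$ proper subspaces does not cover all of $V^\vee\setminus\{0\}$, which holds because a vector space over $\F_p$ of dimension $d\geq 2$ is not a union of finitely many proper subspaces together with the origin (standard counting: $r(p^{d-1}-1)+1<p^d$ for $d$ large, and more carefully one uses that over an infinite ground field this is automatic, while over $\F_p$ one invokes $d>2$ or simply the fact that finitely many proper subspaces cannot cover — here one may need $p$ large or a small refinement, see below).

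Given such $\varphi$, lift it to a continuous surjection $\G\sri\Z_p$ (e.g.\ pick any preimage in $\Hom(\Gal(\calf_d/F),\Z_p)$ of $\varphi$ under reduction mod $p$; surjectivity onto $\Z_p$ is automatic since the image is a nonzero closed subgroup of $\Z_p$ not contained in $p\Z_p$), let $\calf_1$ be the fixed field of its kernel, and note $\varphi(D_j)\neq 0$ forces $\varphi(\bar D_j)\neq 0$ hence $D_j\not\subseteq\Ker\varphi$, so $w_j$ does not split completely in $\calf_1$. Finally, re-choosing the basis $\{\g_i\}$ so that $\g_1$ generates (a complement of $\Ker\varphi$ inside) $\G$ realizes $\calf_1$ as $\calf_1$ in the notation of the paper, as desired.

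\textbf{Main obstacle.} The delicate point is the covering argument over the small field $\F_p$: $r$ proper hyperplanes can cover $\F_p^2$ when $r\geq p+1$, so one genuinely needs the freedom in $d$ (the hypothesis $d>2$) and/or to work in the full $\Z_p$-module rather than mod $p$. The clean fix is to avoid reducing mod $p$ and argue directly in $\Hom_{\mathrm{cont}}(\Gal(\calf_d/F),\Z_p)\simeq\Z_p^d$: the set of homomorphisms annihilating a fixed nonzero $D_j$ (a $\Z_p$-submodule of $\Z_p^d$ which, after saturating, is a direct summand of corank $\geq 1$) is a proper saturated submodule, and $\Z_p^d$ for $d\geq 1$ is \emph{not} a finite union of proper saturated submodules together with lower-dimensional junk — this is true over the infinite ring $\Z_p$ with no constraint on $p$. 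Then one only needs $d\geq 1$ to find $\varphi$, and $d>2$ is used solely to guarantee $\calf_d$ is large enough to contain a genuine $\Z_p$-subextension distinct from $F$ while leaving the remaining directions free; this is where I expect to have to be slightly careful about matching the construction with the paper's indexing convention $\calf_1\subset\calf_d$.
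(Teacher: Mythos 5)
The paper does not actually reproduce a proof of this lemma: it simply cites \cite[Lemma 3.1]{BBL2}, so a line-by-line comparison is impossible. That said, your route through decomposition groups and avoidance of their annihilators is the natural one, and the core of your argument is sound. You are right that the mod-$p$ version is genuinely broken: a finite union of proper $\F_p$-subspaces \emph{can} cover $\F_p^d$ (already $p+1$ hyperplanes cover $\F_p^d$ for any $d$), so no fixed bound on $d$ rescues the counting argument when the number $r$ of ramified places is unbounded. Your fix — working directly in $\Hom_{\mathrm{cont}}(\Gal(\calf_d/F),\Z_p)\simeq\Z_p^d$ and using that the annihilator of each nonzero $D_j$ is a saturated submodule of rank $<d$, hence a closed subgroup of infinite index and therefore Haar-measure zero — is correct and in fact needs only $d\geq 1$. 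After discarding a $p$-power factor, any $\varphi$ outside the union of annihilators is a primitive, hence surjective, map to $\Z_p$, and its kernel cuts out the desired $\calf_1$.

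The genuine soft spot is the step ``after possibly enlarging the chosen initial basis we may assume each $\bar D_j\neq 0$.'' As written the lemma fixes $\calf_d$ first and asks for a sub-$\Z_p$-extension of \emph{that} field; if some $D_j\subseteq\Gal(\calf/F)$, while nonzero, happens to lie inside $\Gal(\calf/\calf_d)$, then $w_j$ already splits completely in $\calf_d/F$ and no choice of $\calf_1\subset\calf_d$ can help — re-choosing the $\g_i$ silently replaces $\calf_d$ by a different field. You should either (i) make explicit that the lemma is to be read as a statement about the existence of a suitable \emph{filtration} (which the paper's ``complete freedom in their initial choice'' remark supports), or (ii) interpret ``ramified places'' as the places ramified in $\calf_d/F$ itself, in which case each $D_j$ is automatically nonzero in $\Gal(\calf_d/F)$ and your argument closes immediately. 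Either way, the hypothesis $d>2$ plays no visible role in your proof; it is most likely inherited from the parallel pseudo-nullity arguments elsewhere in the paper (and in \cite{BBL2}), where $\dim\L(\calf_{d-1})\geq 2$ is what is actually needed, and flagging this mismatch would improve the write-up.
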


\begin{proof}
See \cite[Lemma 3.1]{BBL2}
\end{proof}

Consider the diagram
\begin{equation} \label{DiagSel} \xymatrix{
Sel(\calf_{d-1}) \ar@{^(->}[r] \ar[d]^{a^d_{d-1}} & H^1_{fl}(\calx_{d-1},A[p^\infty]) \ar@{->>}[r] \ar[d]^{b^d_{d-1}} &
\calg(\calx_{d-1}) \ar[d]\ar[d]^{c^d_{d-1}} \\
Sel(\calf_d)^{\G^d_{d-1}} \ar@{^(->}[r] & H^1_{fl}(\calx_d,A[p^\infty])^{\G^d_{d-1}} \ar[r]  & \calg(\calx_d)^{\G^d_{d-1}} }
\end{equation}
where $\calx_d:=Spec(\calf_d)$ and $\calg(\calx_d)$ is the image of the product of the restriction maps
\[ H^1_{fl}(\calx_d,A[p^\infty])\longrightarrow \prod_w H^1_{fl}(\calx_{d,w},A)[p^\infty] \ , \]
with $w$ running over all places of $\calf_d$.

\begin{lem} \label{KercTor}
For any $d \geq 2$, the Pontrjagin dual of $\Ker c^d_{d-1}$ is a finitely generated torsion $\L(\calf_{d-1})$-module.
\end{lem}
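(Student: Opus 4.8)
The plan is to analyze $\Ker c^d_{d-1}$ place by place, using the snake lemma on diagram \eqref{DiagSel} together with an Euler–characteristic/Herbrand-quotient control of the local cohomology groups. First I would note that $\calg(\calx_{d-1})$ and $\calg(\calx_d)^{\G^d_{d-1}}$ are subgroups of the local products $\prod_w H^1_{fl}(\calx_{d-1,w},A)[p^\infty]$ and $\big(\prod_w H^1_{fl}(\calx_{d,w},A)[p^\infty]\big)^{\G^d_{d-1}}$ respectively, so $c^d_{d-1}$ factors through the product over places $v$ of $\calf_{d-1}$ of the natural maps $H^1_{fl}(\calx_{d-1,v},A)[p^\infty]\to \big(\prod_{w\mid v} H^1_{fl}(\calx_{d,w},A)[p^\infty]\big)^{\G^d_{d-1}}$. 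Hence $\Ker c^d_{d-1}$ embeds into a restricted product of the local kernels $\Ker c_v$, and it suffices to bound each $\Ker c_v^\vee$ and to show that only finitely many (up to the $\L(\calf_{d-1})$-action, i.e. finitely many $\G^d_{d-1}$-orbits) of them are nonzero.

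The key local input is the standard description of $H^1_{fl}(\calx_{d,w},A)[p^\infty]$ for $A$ with good ordinary or split multiplicative reduction: away from $S_A$ the unramified places contribute nothing to the kernel because $\calf_d/\calf_{d-1}$ is a $\Z_p$-extension and the cohomology of the pro-$p$ unramified extension of a local field vanishes in the relevant degree (this is where Tan's local computations in \cite{Tan11}, already invoked in the introduction, enter). For the finitely many places $v$ above $S_A$, one controls $\Ker c_v$ via the inflation–restriction sequence for $\G^d_{d-1}$ acting on $H^1_{fl}(\calx_{d,w},A)[p^\infty]$: the kernel is a subquotient of $H^1(\G^d_{d-1}, A(\calf_{d,w})[p^\infty])$ together with $H^1_{fl}(\calx_{d-1,v},A)[p^\infty]$-type terms, and since $\G^d_{d-1}\cong\Z_p$ has cohomological dimension $1$ and $A(\calf_{d,w})[p^\infty]$ is a cofinitely generated $\Z_p$-module, these are cofinitely generated over $\Z_p$. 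Summing over the finitely many places of $\calf_{d-1}$ above $S_A$ — each splitting into finitely many places of $\calf_d$ since the decomposition group is assumed nontrivial — gives that $\Ker c^d_{d-1}$ is cofinitely generated over $\Z_p$, hence its Pontrjagin dual is a finitely generated $\Z_p$-module and \emph{a fortiori} a finitely generated torsion $\L(\calf_{d-1})$-module (torsion because $\L(\calf_{d-1})$ has Krull dimension $d\geq 2 > 1$, so any $\Z_p$-finite module is annihilated by some $t_i$, or more directly because a finitely generated $\Z_p$-module cannot contain a free $\L(\calf_{d-1})$-module).

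The main obstacle I expect is the bookkeeping at the ramified places that split in $\calf_d/F$: one must be sure that the number of places of $\calf_{d-1}$ lying over $S_A$ with nontrivial local kernel is finite, and that each local contribution is genuinely $\Z_p$-cofinitely generated rather than merely $\L(\calf_{d-1})$-cofinitely generated. This is exactly the point where the running hypothesis of this section — that no ramified prime has trivial decomposition group in $\Gal(\calf_1/F)$ — is used: it guarantees that each ramified place of $F$ has only finitely many extensions to $\calf_{d-1}$, so the local product is finite and the cofinitely-generated-over-$\Z_p$ conclusion survives the summation. A secondary technical point is handling the split multiplicative reduction case, where $A(\calf_{d,w})[p^\infty]$ need not be finite; here one uses the Tate parametrization to reduce the local cohomology to that of $\mathbb{G}_m$ and $\Z_p$, for which the same $\Z_p$-cofinite generation holds, again following Tan's arguments.
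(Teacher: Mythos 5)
There is a genuine gap in the proposal: you claim that $\Ker c^d_{d-1}$ is cofinitely generated over $\Z_p$, and you argue this by asserting that only finitely many places of $\calf_{d-1}$ lie above $S_A$, each splitting finitely in $\calf_d$. This is false in general. The running hypothesis of Section \ref{SelGrSec} (nontrivial decomposition group in $\Gal(\calf_1/F)$) is assumed \emph{only for ramified places}, not for the unramified places of bad reduction that also lie in $S_A$. An unramified $v\in S_A$ can easily have decomposition group of small rank inside $\Gal(\calf_{d-1}/F)\simeq\Z_p^{d-1}$, in which case there are \emph{infinitely} many places $u$ of $\calf_{d-1}$ above $v$, and the product over such $u$ of the local kernels is an infinite product; its Pontrjagin dual is certainly not a finitely generated $\Z_p$-module. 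So the conclusion "cofinitely generated over $\Z_p$, hence $\L(\calf_{d-1})$-torsion" does not follow.

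The step your argument is missing is the one the paper actually relies on (Tan's technique): the Galois action of $\Gal(\calf_{d-1}/F)$ permutes the places above $v$, and one obtains the induced-module isomorphism
\[ \calh_v(\calf_d)\simeq\L(\calf_{d-1})\otimes_{\L(\calf_{d-1,v})} H^1(\G^d_{d-1,w},A(\calf_{d,w}))[p^\infty]\ , \]
where $\L(\calf_{d-1,v})$ is the Iwasawa algebra of the decomposition group of $v$. This reduces the problem to showing that the \emph{single} local module $H^1(\G^d_{d-1,w},A(\calf_{d,w}))[p^\infty]^\vee$ is finitely generated and torsion over the (possibly much smaller) ring $\L(\calf_{d-1,v})$; finite generation and torsion over $\L(\calf_{d-1})$ for the full product $\calh_v(\calf_d)^\vee$ then follows formally from the base change, even though the $\Z_p$-corank of $\calh_v(\calf_d)$ may be infinite. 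The paper's place-by-place analysis (Milne's Proposition I.3.8 for unramified $v$, the multiplication-by-$p$ exact sequence for ramified $v$) supplies exactly this local input. Your inflation--restriction and cohomological-dimension observations are fine as far as they go, but without the induced-module structure you cannot pass from the local bound to the global finiteness/torsion claim.

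A minor additional remark: your appeal to the Tate parametrization in the split multiplicative case is not needed here. The paper handles the local modules uniformly: for unramified $v$ via $\pi_0$ of the N\'eron special fibre, for ramified $v$ via the elementary observation that $H^1(\G^d_{d-1,v},A(\calf_{d,w}))[p]$ is finite, which already forces the dual to be $\Z_p$-finitely generated, hence torsion over $\Z_p[[\G_{d-1,v}]]$ once the decomposition group is nontrivial.
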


\begin{proof}
For any place $v$ of $F$ we fix an extension to $\calf$, which by a slight abuse of notation we still denote by $v$,
so that the set of places of $\calf_d$ above $v$ will be the Galois orbit $\Gal(\calf_d/F)\cdot v$.
For any field $L$ let $\mathcal{P}_L$ be the set of places of $L$. We have an obvious injection
\begin{equation} \label{e:kerckerh}
\Ker c^d_{d-1} \iri \prod_{u\in\mathcal{P}_{\calf_{d-1}}} \Ker \left\{ H_{fl}^1(\calx_{d-1,u},A)[p^\infty]
\longrightarrow \prod_{w\mid u} H_{fl}^1(\calx_{d,w},A)[p^\infty] \right\}
\end{equation}
(the map is the product of the natural restrictions $r_w\,$). By the Hochschild-Serre spectral sequence, we get
\begin{equation}\label{kerrwh1}
\Ker r_w \simeq  H^1(\G^d_{d-1,w},A(\calf_{d,w}))[p^\infty]
\end{equation}
where $\G^d_{d-1,w}$ is the decomposition group of $w$ in $\G^d_{d-1}\,$. As $w$ varies among the places dividing $u$,
those kernels are isomorphic, so fixing primes $w\mid u\mid v$, we can regard $\Ker \prod_{w\mid u} r_w$ as a submodule of
\[ \calh_v(\calf_d):=\prod_{u\in\Gal(\calf_{d-1}/F)\cdot v} H^1(\G^d_{d-1,w},A(\calf_{d,w}))[p^\infty] \ .\]
Therefore $\calh_v(\calf_d)=0$ for all primes which totally split in $\calf_d/\calf_{d-1}$ and, from now on, we
only consider inert or ramified places.

\noindent Let $\Lambda(\calf_{i,v}):=\Z_p[[\Gal(\calf_{i,v}/F_v)]]$ be the Iwasawa algebra associated to the decomposition group
of $v$ and note that each $\Ker r_w$ is a $\L(\calf_{d-1,v})$-module.
Moreover, we get an action of $\Gal(\calf_d/F)$ on $\calh_v(\calf_d)$ by permutation of the primes and an isomorphism
\begin{equation} \label{e:lemmatan}
\calh_v(\calf_d)\simeq\L(\calf_{d-1})\otimes_{\L(\calf_{d-1,v})} H^1(\G^d_{d-1,w},A(\calf_{d,w}))[p^\infty] \end{equation}
for some fixed $w$ (provided that $H^1(\G^d_{d-1,w},A(\calf_{d,w})[p^\infty]$ is finitely generated over $\L(\calf_{d-1,v})$,
more details can be found in \cite[Section 3]{Tan11}).

\noindent First assume that the place $v$ is unramified in $\calf_d/F$ (hence inert in $\calf_d/\calf_{d-1}\,$).
Then $\calf_{d-1,v}=F_v\neq\calf_{d,v}$ and one has, by \cite[Proposition I.3.8]{Mi},
\[ H^1(\G^d_{d-1,w},A(\calf_{d,w}) \simeq H^1(\G^d_{d-1,w},\pi_0(\mathcal{A}_{0,v}))\ , \]
where $\mathcal{A}_{0,v}$ is the closed fiber of the N\'eron model of $A$ over $F_v$ and $\pi_0(\mathcal{A}_{0,v})$ its set
of connected components. It follows that $H^1(\G^d_{d-1,w},A(\calf_{d,w})[p^\infty]$ is trivial when $v$ does not lie above
$S_A$ and that it is finite of order bounded by (the $p$-part of) $|\pi_0(\mathcal{A}_{0,v})|$ for the unramified places
of bad reduction. Hence
\[ \xymatrix { \Ker c^d_{d-1} \ar@{^(->}[r] & \displaystyle{\bigoplus_{v\in S_A}\, \calh_v(\calf_d)} } \]
and, by \eqref{e:lemmatan}, $\calh_v(\calf_d)^\vee$ is a finitely generated torsion
$\L(\calf_{d-1})$-module for unramified $v$.

\noindent For the ramified case the exact sequence
\[ \xymatrix{ A(\calf_{d,w})[p]\, \ar@{^(->}[r]  & A(\calf_{d,w}) \ar@{->>}[r]^{p}  & pA(\calf_{d,w}) } \]
yields a surjection
\[ \xymatrix{ H^1(\G^d_{d-1,v},A(\calf_{d,w})[p]) \ar@{->>}[r]  & H^1(\G^d_{d-1,v},A(\calf_{d,w}))[p] \ .} \]
The first module is obviously finite, so $H^1(\G^d_{d-1,v},A(\calf_{d,w}))[p]$ is finite as well:
this implies that $H^1(\G^d_{d-1,w},A(\calf_{d,w})[p^\infty]^\vee$ has finite $\Z_p$-rank. As a finitely generated $\Z_p$-module,
$H^1(\G^d_{d-1,w},A(\calf_{d,w})[p^\infty]^\vee$ must be $\Z_p[[\G_{d-1,v}]]$-torsion (because of our choice of $\calf_1/F$)
and \eqref{e:lemmatan} shows once again that $\calh_v(\calf_d)^\vee$ is finitely generated and torsion over $\L(\calf_{d-1})$.
\end{proof}

\begin{rem} One can go deeper in the details and compute those kernels according to the reduction of $A$ at $v$ and the
behaviour of $v$ in $\calf_d/F$. We will do this in Section \ref{pramSel} but only for the particular case of a totally ramified
extension (with the statement of a Main Conjecture as a final goal). See \cite{Tan11} for a more general analysis.
\end{rem}

The following proposition provides a crucial step towards equation \eqref{DescentEq}
(in particular it takes care of hypothesis {\bf 2} of Theorem \ref{IntroThmBBL2}).

\begin{prop}\label{FinGenSel}
Let $e$ be as in Assumption \ref{AssSel}. For any $d> e$, the module $\cals(\calf_d)/I^d_{d-1}$ is a finitely
generated torsion $\L(\calf_{d-1})$-module and $\cals(\calf_d)$ is a finitely generated torsion $\L(\calf_d)$-module.
Moreover, if $d>\max\{2,e\}$,
\[ Ch_{\L(\calf_{d-1})}(\cals(\calf_d)/I^d_{d-1}) = Ch_{\L(\calf_{d-1})}(\cals(\calf_{d-1})) \cdot
Ch_{\L(\calf_{d-1})} ((\Coker a^d_{d-1})^\vee)\ . \]
\end{prop}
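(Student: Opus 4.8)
The statement will be derived from the restriction map $a:=a^d_{d-1}\colon Sel(\calf_{d-1})\ri Sel(\calf_d)^{\G^d_{d-1}}$. Since $\G^d_{d-1}\simeq\Z_p$ is topologically generated by $\g_d$ and $I^d_{d-1}=(t_d)=(\g_d-1)$, Pontrjagin duality identifies $\cals(\calf_d)/I^d_{d-1}$ with the coinvariants $\cals(\calf_d)_{\G^d_{d-1}}=(Sel(\calf_d)^{\G^d_{d-1}})^\vee$. So the plan is to dualize the tautological exact sequence $0\ri\Ker a\ri Sel(\calf_{d-1})\ri Sel(\calf_d)^{\G^d_{d-1}}\ri\Coker a\ri 0$ of discrete $\Gal(\calf_{d-1}/F)$-modules into the exact sequence of $\L(\calf_{d-1})$-modules
\[ 0\ri(\Coker a)^\vee\ri\cals(\calf_d)/I^d_{d-1}\ri\cals(\calf_{d-1})\ri(\Ker a)^\vee\ri 0 . \]
Finite generation of $\cals(\calf_d)/I^d_{d-1}$ over $\L(\calf_{d-1})=\L(\calf_d)/(t_d)$ is then immediate from Theorem \ref{SelStruct}.

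To control the outer terms I would apply the snake lemma to \eqref{DiagSel}. The top row is short exact and the bottom row is exact at its first two terms, which already produces the connecting map and the portion $\Ker b^d_{d-1}\ri\Ker c^d_{d-1}\ri\Coker a\ri\Coker b^d_{d-1}$ of the snake sequence; the care needed here is exactly that the bottom row need not be right exact, so exactness of the snake at $\Coker a$ has to be verified by hand. By the Hochschild--Serre spectral sequence for flat cohomology (as in the proof of Lemma \ref{KercTor}), $\Ker b^d_{d-1}\simeq H^1(\G^d_{d-1},A[p^\infty](\calf_d))$ and $\Coker b^d_{d-1}\iri H^2(\G^d_{d-1},A[p^\infty](\calf_d))$; since $\G^d_{d-1}\simeq\Z_p$ has $p$-cohomological dimension $1$, the latter vanishes, so $\Coker a$ is a quotient of $\Ker c^d_{d-1}$ while $\Ker a$ is a submodule of $\Ker b^d_{d-1}$. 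Hence $(\Coker a)^\vee$ embeds into $(\Ker c^d_{d-1})^\vee$, which by Lemma \ref{KercTor} is finitely generated torsion over $\L(\calf_{d-1})$; so $(\Coker a)^\vee$ is too.

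For the torsion assertions I would argue by induction on $d\geq e$. The base case $d=e$ is Assumption \ref{AssSel}. For the inductive step, if $\cals(\calf_{d-1})$ is $\L(\calf_{d-1})$-torsion then the displayed four term sequence exhibits $\cals(\calf_d)/I^d_{d-1}$ as an extension of a submodule of $\cals(\calf_{d-1})$ by $(\Coker a)^\vee$, hence torsion over $\L(\calf_{d-1})$; and since $\cals(\calf_d)$ is finitely generated over $\L(\calf_d)=\L(\calf_{d-1})[[t_d]]$, a standard support/dimension count --- a finitely generated $\L(\calf_{d-1})[[t_d]]$-module whose reduction modulo $t_d$ is $\L(\calf_{d-1})$-torsion is $\L(\calf_d)$-torsion (compare \cite{Oc}) --- gives that $\cals(\calf_d)$ is $\L(\calf_d)$-torsion. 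In particular, for $d>e$ every term of the four term sequence is a torsion $\L(\calf_{d-1})$-module.

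It remains to extract the characteristic ideal identity; write $Ch:=Ch_{\L(\calf_{d-1})}$. Multiplicativity of $Ch$ along the four term exact sequence of torsion modules gives $Ch(\cals(\calf_d)/I^d_{d-1})\cdot Ch((\Ker a)^\vee)=Ch(\cals(\calf_{d-1}))\cdot Ch((\Coker a)^\vee)$, so the task reduces to showing $Ch((\Ker a)^\vee)=(1)$ when $d>\max\{2,e\}$, i.e.\ that $(\Ker a)^\vee$ is pseudo-null over $\L(\calf_{d-1})$. For this I would combine $\Ker a\iri\Ker b^d_{d-1}\simeq A[p^\infty](\calf_d)_{\G^d_{d-1}}$ with the fact that $A[p^\infty](\overline F)$ is cofinitely generated over $\Z_p$ (it sits inside $(\Q_p/\Z_p)^{\dim A}$): then $\Ker b^d_{d-1}$, being a quotient of $A[p^\infty](\calf_d)$, is cofinitely generated over $\Z_p$, so $(\Ker a)^\vee$ is finitely generated over $\Z_p$; hence $\L(\calf_{d-1})/\mathrm{Ann}((\Ker a)^\vee)$ is finite over $\Z_p$ and $\mathrm{Ann}((\Ker a)^\vee)$ has height $\geq\dim\L(\calf_{d-1})-1=d-1\geq 2$, so $(\Ker a)^\vee$ is pseudo-null. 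Feeding $Ch((\Ker a)^\vee)=(1)$ back into the relation above yields the claimed identity. I expect this last step to be the real obstacle: it is where the hypothesis $d>2$ is genuinely used, since over $\Z_p[[t_1]]$ a finitely generated $\Z_p$-module is torsion but need not be pseudo-null; everything else reduces to Pontrjagin duality, the cohomological dimension bound for $\Z_p$, and the already available Lemma \ref{KercTor} and Theorem \ref{SelStruct}.
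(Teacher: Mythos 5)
Your proof tracks the paper's own argument essentially step for step: the four-term exact sequence obtained by dualizing the snake associated to diagram \eqref{DiagSel}, the vanishing of $\Coker b^d_{d-1}$ because $\G^d_{d-1}\simeq\Z_p$ has $p$-cohomological dimension $1$ (so $\Coker a$ is a quotient of $\Ker c^d_{d-1}$ and Lemma \ref{KercTor} applies), the observation that $\Ker a$ injects into $\Ker b^d_{d-1}\simeq A[p^\infty](\calf_d)/I^d_{d-1}$ whose dual is finitely generated over $\Z_p$ and hence $\L(\calf_{d-1})$-pseudo-null once $d\geq3$, the bootstrapping of the torsion claim from $\cals(\calf_d)/I^d_{d-1}$ up to $\cals(\calf_d)$ via the standard Nakayama/dimension argument (the paper cites \cite[page 207]{Gr1}), and finally multiplicativity of characteristic ideals along the four-term sequence. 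The only cosmetic differences are that you phrase the bootstrapping as an explicit induction on $d$, spell out the pseudo-nullity of $(\Ker a)^\vee$ in a bit more detail, and flag the (correct) point that the bottom row of \eqref{DiagSel} is not a priori right exact, which is exactly why the paper invokes $\Coker b^d_{d-1}=0$ before concluding; none of this changes the route.
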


\begin{proof}
It suffices to prove the first statement for $d=e+1$, since then a standard argument (detailed e.g. in
\cite[page 207]{Gr1}) shows that $\cals(\calf_{e+1})$ is $\L(\calf_{e+1})$-torsion and we can iterate
the process. From diagram \eqref{DiagSel} one gets a sequence
\begin{equation}\label{DualSeqSel} \xymatrix{ (\Coker a^{e+1}_e)^\vee\, \ar@{^(->}[r]  &
(Sel(\calf_{e+1})^{\G^{e+1}_e})^\vee \longrightarrow \cals(\calf_e) \ar@{->>}[r]  & (\Ker a^{e+1}_e)^\vee \ . }
\end{equation}
By the Hochschild-Serre spectral sequence, it follows
$$\xymatrix{ \Coker b^{e+1}_e\, \ar@{^(->}[r]  & H^2(\G^{e+1}_e, A[p^\infty](\calf_{e+1}))=0 }$$
(because $\G^{e+1}_e$ has $p$-cohomological dimension 1). Therefore there is a surjective map
\[ \xymatrix{ \Ker c^{e+1}_e \ar@{->>}[r]  & \Coker a^{e+1}_e} \]
and, by Lemma \ref{KercTor},
$(\Coker a^{e+1}_e)^\vee$ is $\L(\calf_e)$-torsion. Hence Assumption \ref{AssSel} and sequence \eqref{DualSeqSel} yield that
\[ (Sel(\calf_{e+1})^{\G^{e+1}_e})^\vee \simeq \cals(\calf_{e+1})/I^{e+1}_e  \]
is $\L(\calf_e)$-torsion. To conclude note that (for any $d$) the duals of
\[ \Ker a^d_{d-1} \iri \Ker b^d_{d-1} \simeq H^1(\G^d_{d-1}, A[p^\infty](\calf_d))\simeq
A[p^\infty](\calf_d)/I^d_{d-1}  \]
are finitely generated $\Z_p$-modules (hence pseudo-null over $\L(\calf_{d-1})$ for any $d\geq 3$). Taking characteristic ideals
in the sequence \eqref{DualSeqSel}, for large enough $d$, one finds
\[ Ch_{\L(\calf_{d-1})}(\cals(\calf_d)/I^d_{d-1}) = Ch_{\L(\calf_{d-1})} (\cals(\calf_{d-1})) \cdot
Ch_{\L(\calf_{d-1})} ((\Coker a^d_{d-1})^\vee)\ . \]
\end{proof}

\noindent Whenever $\cals(\calf_d)$ is a finitely generated torsion $\L(\calf_d)$-module, \cite[Proposition 2.10]{BBL2} yields
\begin{equation}\label{CharIdSel1}
Ch_{\L(\calf_{d-1})}(\cals(\calf_d)^{\G^d_{d-1}})\cdot \pi^d_{d-1}(Ch_{\L(\calf_d)}(\cals(\calf_d))) =
Ch_{\L(\calf_{d-1})}(\cals(\calf_d)/I^d_{d-1}) \ .\end{equation}

\noindent If $d>\max\{2,e\}$, equation \eqref{CharIdSel1} turns into
\begin{equation}\label{CharIdSel2}
Ch_{\L(\calf_{d-1})}(\cals(\calf_d)^{\G^d_{d-1}})\pi^d_{d-1}(Ch_{\L(\calf_d)}(\cals(\calf_d))) =
Ch_{\L(\calf_{d-1})} (\cals(\calf_{d-1})) Ch_{\L(\calf_{d-1})}((\Coker a^d_{d-1})^\vee) \, .
\end{equation}

\noindent Therefore, whenever we can prove that $\cals(\calf_d)^{\G^d_{d-1}}$ is a pseudo-null $\L(\calf_{d-1})$-module (i.e.,
hypothesis {\bf 1} of Theorem \ref{IntroThmBBL2}), we immediately get
\begin{equation}\label{CharIdSel3}
\pi^d_{d-1}(Ch_{\L(\calf_d)}(\cals(\calf_d))) \subseteq Ch_{\L(\calf_{d-1})} (\cals(\calf_{d-1}))
\end{equation}
and Theorem \ref{IntroThmBBL2} will provide the definition of the pro-characteristic ideal for
$\cals(\calf)$ in $\L(\calf)$ we were looking for.

\section{$\Z_p$-descent for totally ramified extensions} \label{pramSel}
The main examples we have in mind are extensions satisfying the following

\begin{ass}\label{AssRam} The (finitely many) ramified places of $\calf/F$ are totally ramified. \end{ass}

\noindent In what follows an extension satisfying this assumption will be called a {\it totally ramified extension}.
A prototypical example is the $\mathfrak{a}$-cyclotomic extension of $\F_q(T)$ generated by the $\mathfrak{a}$-torsion of the Carlitz
module ($\mathfrak{a}$ an ideal of $\F_q[T]$, see e.g. \cite[Chapter 12]{Ro}).
As usual in Iwasawa theory over number fields, most of the proofs will work (or can be adapted) simply assuming that ramified primes
are totally ramified in $\calf/\calf_e$ for some $e\geq 0$, but, in the function field setting, one would need
some extra hypothesis on the behaviour of these places in $\calf_e/F$ (as we have seen with Lemma \ref{RamLemma}, note
that in totally ramified extensions any $\Z_p$-subextension can play the role of $\calf_1\,$).

\begin{thm}\label{CharIdCoker}
Assume $\calf/F$ is a totally ramified extension, then, for $d\gg 0$, there exists a
$\nu\geq 0$ such that
\[ Ch_{\L(\calf_{d-1})} ((\Coker\,a^d_{d-1})^\vee)\ = (p^\nu) \ . \]
Moreover if no unramified place $v\in S_A$ splits completely in $\calf$, then, for $d \gg 0$, we have
\[ Ch_{\L(\calf_{d-1})} ((\Coker\,a^d_{d-1})^\vee)\ = (1)\ . \]
\end{thm}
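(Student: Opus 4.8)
The plan is to control $\Coker a^d_{d-1}$ via the surjection $\Ker c^d_{d-1} \twoheadrightarrow \Coker a^d_{d-1}$ obtained in the proof of Proposition \ref{FinGenSel}, and hence to analyze the local terms $\calh_v(\calf_d)$ for $v\in S_A$ more precisely than in Lemma \ref{KercTor}, using the total ramification hypothesis. Since $\Coker a^d_{d-1}$ is a quotient of $\Ker c^d_{d-1}$, which injects into $\bigoplus_{v\in S_A}\calh_v(\calf_d)$, it suffices to compute, for $d\gg 0$, the structure of $H^1(\G^d_{d-1,w},A(\calf_{d,w}))[p^\infty]$ as a $\L(\calf_{d-1,v})$-module and then transfer via the isomorphism \eqref{e:lemmatan} $\calh_v(\calf_d)\simeq\L(\calf_{d-1})\otimes_{\L(\calf_{d-1,v})} H^1(\G^d_{d-1,w},A(\calf_{d,w}))[p^\infty]$. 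The point is that for $v\in S_A$ there are exactly two cases under Assumption \ref{AssRam}: either $v$ is (totally) ramified, in which case the decomposition group of $v$ in $\G$ is all of $\G$ and $\calf_{d,v}/F_v$ is a $\Z_p^d$-extension; or $v$ is unramified, in which case (for $d\gg 0$) its decomposition group is either trivial — i.e. $v$ splits completely in $\calf$ — or a $\Z_p^{d'}$-extension with $d'\geq 1$ and $v$ inert in $\calf_d/\calf_{d-1}$ for $d$ large.

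The key computations I would carry out are as follows. For a totally ramified $v\in S_A$ with $A$ of good ordinary or split multiplicative reduction: here $\G^d_{d-1,w}=\G^d_{d-1}\simeq\Z_p$, so $H^1(\Z_p, A(\calf_{d,w}))[p^\infty]\simeq A(\calf_{d,w})[p^\infty]/I^d_{d-1}$, which is a quotient of the finite group $A[p^\infty](\calf_{d,w})$; adapting Tan's local analysis \cite{Tan11}, one shows this is $\L(\calf_{d-1,v})$-pseudo-null — in fact bounded independently of $d$ by reduction-type considerations (for split multiplicative reduction one uses the Tate parametrization; for good ordinary, the formal group contributes nothing new $p^\infty$-torsion and the special fibre contribution is controlled). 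Tensoring up to $\L(\calf_{d-1})$ preserves pseudo-nullity because, for the ramified $v$, $\L(\calf_{d-1,v})=\L(\calf_{d-1})$. For an unramified inert $v\in S_A$ (nontrivial decomposition group), the earlier Lemma \ref{KercTor} argument already gives that $\calh_v(\calf_d)^\vee$ is torsion, and one refines this: $H^1(\G^d_{d-1,w},A(\calf_{d,w}))[p^\infty]\simeq H^1(\G^d_{d-1,w},\pi_0(\mathcal A_{0,v}))[p^\infty]$ is finite of bounded order, and induction to $\L(\calf_{d-1})$ over the index-$\infty$ subring $\L(\calf_{d-1,v})$ makes it pseudo-null for $d\gg 0$ (this is exactly the Tan tensor trick of \eqref{e:lemmatan}, with the relevant $\Z_p$-corank dropping). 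The upshot: for $d\gg 0$, $\Coker a^d_{d-1}$ is a subquotient of a module whose dual is $\Z_p$-torsion of bounded order, with the only possible non-pseudo-null contribution coming from an unramified $v\in S_A$ that splits completely in $\calf$ — and even that contributes only a power of $p$.

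Concretely, I expect to show $(\Coker a^d_{d-1})^\vee$ is, for $d\gg 0$, a finite abelian $p$-group (all the local pieces being finite of bounded order), so its $\L(\calf_{d-1})$-characteristic ideal is $(p^\nu)$ for some $\nu\geq 0$; and if no unramified $v\in S_A$ splits completely, every local piece becomes pseudo-null over $\L(\calf_{d-1})$ (having positive codimension support, being induced from a proper subring), whence $Ch_{\L(\calf_{d-1})}((\Coker a^d_{d-1})^\vee)=(1)$. The main obstacle will be the bookkeeping in the split-completely case: when $v\in S_A$ is unramified and splits completely, $\calh_v(\calf_d)=0$ by the observation following \eqref{kerrwh1}, so that case actually contributes nothing to $\Ker c^d_{d-1}$ — the genuine source of the possible $(p^\nu)$ must instead be traced to the comparison between $\Ker c^d_{d-1}$ and $\Coker a^d_{d-1}$ together with the places that are unramified but with a completely split \emph{behaviour relative to $\calf$ versus $\calf_d$}, and pinning down exactly where the $p$-power enters (as opposed to proving outright pseudo-nullity) requires care with the finite $\Z_p$-modules $A[p^\infty](\calf_d)/I^d_{d-1}$ and $\Ker a^d_{d-1}$, and with Tan's local Euler-characteristic computations. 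I would handle this by first proving the clean statement — pseudo-nullity of each $\calh_v(\calf_d)^\vee$ for $d\gg 0$ when no $v\in S_A$ splits completely in $\calf$ — and then isolating the finitely many bad places to extract the residual $(p^\nu)$ in general.
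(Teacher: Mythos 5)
Your overall strategy is the one the paper uses: pass via the surjection $\Ker c^d_{d-1}\twoheadrightarrow\Coker a^d_{d-1}$ (a pseudo-isomorphism after dualizing, since the kernel is controlled by $(\Ker b^d_{d-1})^\vee$, finitely generated over $\Z_p$), reduce to the local modules $\calh_v(\calf_d)$ for $v\in S_A$, and exploit the induction isomorphism~\eqref{e:lemmatan} to compare $\L(\calf_{d-1})$-characteristic ideals with $\L(\calf_{d-1,v})$-characteristic ideals. The case split — totally ramified $v$, unramified $v$ with eventually positive-dimensional decomposition group, unramified $v$ that splits completely — is also exactly the paper's.

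Two remarks on details. First, for totally ramified $v$ your intermediate claim $H^1(\Z_p,A(\calf_{d,w}))[p^\infty]\simeq A(\calf_{d,w})[p^\infty]/I^d_{d-1}$ is not justified as stated (taking $p^\infty$-torsion does not commute with coinvariants in the needed way), and the appeal to Tate parametrization / formal-group considerations is unnecessary: the paper instead uses the short exact sequence $A[p]\hookrightarrow A\twoheadrightarrow pA$ to produce a surjection $H^1(\G^d_{d-1,w},A[p])\twoheadrightarrow H^1(\G^d_{d-1,w},A)[p]$, whence the dual of $H^1(\G^d_{d-1,w},A(\calf_{d,w}))[p^\infty]$ is a finitely generated $\Z_p$-module and hence $\L(\calf_{d-1})$-pseudo-null for $d\geq 3$ (using $\L(\calf_{d-1,v})=\L(\calf_{d-1})$ by total ramification). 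You should replace your argument by this cleaner one, which also requires no hypothesis on the reduction type at the ramified places.

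Second, your worry in the last paragraph is pointing at something real but your proposed resolution is wrong. You are right that if $v\in S_A$ is unramified and splits completely in $\calf/F$, then $\G^d_{d-1,w}=0$ and $\calh_v(\calf_d)=0$, so this place contributes nothing. But tracing the potential $(p^\nu)$ to ``the comparison between $\Ker c^d_{d-1}$ and $\Coker a^d_{d-1}$'' cannot work: that comparison is a pseudo-isomorphism for $d\geq 3$, hence produces \emph{equal} characteristic ideals, not an extra factor. What the paper actually does in the split-completely case is exactly what you observed, namely it records $Ch_{\L(\calf_{d-1,v})}(H^1(\G^d_{d-1,w},A(\calf_{d,w}))[p^\infty]^\vee)=(p^{\nu_v})$ (the order of a finite $\Z_p$-module over $\L(\calf_{d-1,v})\simeq\Z_p$), and since here $\L(\calf_{d-1,v})$ has Krull dimension $1$ the induced ideal in $\L(\calf_{d-1})$ need not vanish. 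The paper does not observe that this finite module is in fact $0$; it simply allows $(p^{\nu_v})$ and absorbs it into the $(p^\nu)$ of the statement. So the ``source'' of $(p^\nu)$ in the paper is precisely the split-completely local term — and your sharper observation that it is actually $0$ would, if carried through, give the cleaner unconditional conclusion $Ch_{\L(\calf_{d-1})}((\Coker a^d_{d-1})^\vee)=(1)$. In short: your analysis is correct and matches the paper's; drop the last paragraph's attempt to locate $(p^\nu)$ elsewhere and replace the unjustified coinvariants isomorphism by the $A[p]$-sequence argument, and you have the paper's proof.
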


\begin{proof}
The proof of Proposition \ref{FinGenSel} shows that the $\L(\calf_{d-1})$-modules $(\Coker\,a^d_{d-1})^\vee$ and
$(\Ker c^d_{d-1})^\vee$ are pseudo-isomorphic for $d\geq 3$. Moreover, by the proof of Lemma \ref{KercTor},
we know that $(\Ker c^d_{d-1})^\vee$ is a quotient of $\displaystyle{\bigoplus_{v\in S_A} \calh_v(\calf_d)^\vee}\,$.
By equation \eqref{e:lemmatan}, we have
\begin{equation}\label{TanChId}
Ch_{\L(\calf_{d-1})}( \calh_v(\calf_d)^\vee) = \L(\calf_{d-1})Ch_{\L(\calf_{d-1,v})}(H^1(\G^d_{d-1,w},A(\calf_{d,w})[p^\infty]^\vee) \ .
\end{equation}
We also saw that, for a ramified prime $v$, $\calh_v(\calf_d)^\vee$ (which is $H^1(\G^d_{d-1,w},A(\calf_{d,w})[p^\infty]^\vee$,
because $v$ is totally ramified) is finitely generated over $\Z_p$, hence pseudo-null over $\L(\calf_{d-1,v})=\L(\calf_{d-1})$ for $d\geq 3$.

\noindent Now we check the unramified primes in $S_A\,$. If $v$ splits completely in $\calf/F$, then
$\L(\calf_{d-1,v})\simeq \Z_p$ for any $d$ and, since (by Lemma \ref{KercTor}) $H^1(\G^d_{d-1,w},A(\calf_{d,w})[p^\infty]^\vee$ is finite,
we get
\[ Ch_{\L(\calf_{d-1,v})}(H^1(\G^d_{d-1,w},A(\calf_{d,w})[p^\infty]^\vee) = |H^1(\G^d_{d-1,w},A(\calf_{d,w})[p^\infty]^\vee|=(p^{\nu_v}) \ .\]
If $v$ is inert in some extension $\calf_d/\calf_{d-1}\,$, then
\[ \L(\calf_{d-1,v})\simeq \Z_p\quad {\rm  and}\quad  \L(\calf_{r,v})\simeq \Z_p[[t_d]] \ {\rm for \ any}\ r\geq d\ .\]
Hence
\[ Ch_{\L(\calf_{r-1,v})}(H^1(\G^d_{d-1,w},A(\calf_{d,w})[p^\infty]^\vee) = (1) \ {\rm for \ any}\ r\geq d+1\ .\]
These local informations and \eqref{TanChId} yield the theorem.
\end{proof}

Now we deal with the other extra term of equation \eqref{CharIdSel2}, i.e.,
$Ch_{\L(\calf_{d-1})}(\cals(\calf_d)^{\G^d_{d-1}})$. Note first that, taking duals
\[ (\cals(\calf_d)^{\G^d_{d-1}})^\vee \simeq \cals(\calf_d)^\vee/(\g_d-1) = Sel(\calf_d)/(\g_d-1) \ ,\]
so we work on the last module.

\noindent From now on we put $\g:=\g_d$ and we shall need the following (see also \cite[Proposition 4.3.2]{Tan11})

\begin{lem}\label{Surjgamma-1}
We have
\[ H^1_{fl}(\calx_d,A[p^\infty]) = (\g-1)H^1_{fl}(\calx_d,A[p^\infty])\ .\]
\end{lem}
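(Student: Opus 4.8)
The plan is to prove surjectivity of $\g-1$ on $H^1_{fl}(\calx_d,A[p^\infty])$ by exhibiting, via the Hochschild–Serre spectral sequence for the extension $\calf_d/\calf_{d-1}$ (with group $\G^d_{d-1}\simeq\Z_p$, topologically generated by $\g$), that the relevant higher cohomology vanishes. Writing $M:=H^1_{fl}(\calx_d,A[p^\infty])$ as a discrete $\G^d_{d-1}$-module, the cokernel of $\g-1$ acting on $M$ is precisely $H^1(\G^d_{d-1},M)_{\mathrm{Taut}}$ in the sense that $M/(\g-1)M=M_{\G^d_{d-1}}$-type coinvariants fit into the five-term exact sequence
\[ H^1(\G^d_{d-1},A[p^\infty](\calf_d))\longrightarrow \Ker\!\big(H^2_{fl}(\calx_{d-1},A[p^\infty])\to H^2_{fl}(\calx_d,A[p^\infty])^{\G^d_{d-1}}\big)\longrightarrow H^2(\G^d_{d-1},A[p^\infty](\calf_d)). \]
What I actually need is the dual/companion statement: the obstruction to $\g-1$ being surjective on $M$ is measured by $H^2(\G^d_{d-1},-)$ of the ``next'' term down, but since $\G^d_{d-1}$ has $p$-cohomological dimension $1$ (it is $\simeq\Z_p$ and we are dealing with $p$-primary discrete modules), every $H^2(\G^d_{d-1},N)$ with $N$ a $p$-primary discrete module vanishes. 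So the cleanest route is: compare $H^1_{fl}(\calx_d,A[p^\infty])$ with $H^1_{fl}(\calx_{d-1},A[p^\infty])$ and with the corresponding $H^2$'s through the Hochschild–Serre spectral sequence $H^i(\G^d_{d-1},H^j_{fl}(\calx_d,A[p^\infty]))\Rightarrow H^{i+j}_{fl}(\calx_{d-1},A[p^\infty])$, and extract that coinvariants of $M$ inject into $H^1_{fl}(\calx_{d-1},A[p^\infty])$ while the relevant quotient is controlled by $H^2$ terms that are zero.

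Concretely, I would proceed as follows. First, recall $\calx_d=\mathrm{Spec}(\calf_d)$ is a point, so $H^i_{fl}(\calx_d,A[p^\infty])$ is just Galois (flat) cohomology of the field $\calf_d$, and for $i\geq 2$ over a global function field these $p$-primary groups are still meaningful but — crucially — the group $\G^d_{d-1}=\Gal(\calf_d/\calf_{d-1})\simeq\Z_p$ has $\mathrm{cd}_p=1$. Second, from the spectral sequence the edge map gives an exact sequence
\[ 0\to H^1(\G^d_{d-1},A[p^\infty](\calf_d))\to H^1_{fl}(\calx_{d-1},A[p^\infty])\to M^{\G^d_{d-1}}\to H^2(\G^d_{d-1},A[p^\infty](\calf_d))=0, \]
which handles invariants; but I want coinvariants $M/(\g-1)M=M_{\G^d_{d-1}}$. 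For a $\Z_p$ acting on a discrete $p$-primary module, $M_{\G^d_{d-1}}=H^0(\G^d_{d-1},M)$ only after Pontrjagin duality swaps it with $M^\vee$; so the efficient statement is that $\g-1$ is surjective on $M$ iff $\g-1$ is injective on $M^\vee$, i.e. iff $(M^\vee)^{\G^d_{d-1}}=0$, i.e. iff $H_1(\G^d_{d-1},M)=0$. Since $\G^d_{d-1}\simeq\Z_p$ is a free pro-$p$ group of rank $1$, $H_j(\G^d_{d-1},-)=0$ for $j\geq 2$ and $H_1(\G^d_{d-1},M)\cong M_{\mathrm{tors}}^{\g}$-type term — the real input must be that $A[p^\infty](\calf_d)$, equivalently the $\g$-invariants appearing in $H_1$, contribute nothing after the relevant comparison, or more robustly that $H^2_{fl}(\calx_d,A[p^\infty])$ and $H^2_{fl}(\calx_{d-1},A[p^\infty])$ match up so that the cokernel of $\g-1$ on $M$ embeds into $H^2(\G^d_{d-1},H^0)=0$. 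I would therefore run the spectral sequence one degree higher: the cokernel of $\g-1$ on $M$ is a subquotient of $H^2_{fl}(\calx_{d-1},A[p^\infty])$ sitting between the image of $H^2(\G^d_{d-1},H^0)$ and $\Ker(H^2_{fl}(\calx_{d-1})\to H^2_{fl}(\calx_d))$ — and $\mathrm{cd}_p(\G^d_{d-1})=1$ forces $H^2(\G^d_{d-1},H^0_{fl}(\calx_d,A[p^\infty]))=0$ and $H^2(\G^d_{d-1},H^1_{fl}(\calx_d,A[p^\infty]))=0$, collapsing the relevant piece and yielding $M=(\g-1)M$.

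The main obstacle, and the step I would be most careful about, is making the last assertion precise: one must identify the cokernel of $\g-1$ on $M$ with a genuine $E_\infty$-subquotient of $H^2_{fl}(\calx_{d-1},A[p^\infty])$ and then check that all $E_2^{i,j}$ with $i+j=2$, $i\geq 1$ that could contribute vanish by $p$-cohomological dimension $1$ of $\Z_p$. The contribution from $E_2^{1,1}=H^1(\G^d_{d-1},M)$ is exactly $M/(\g-1)M$ up to the $\Z_p$-coefficient bookkeeping (for $G\simeq\Z_p$ one has $H^1(G,M)=M_G=M/(\g-1)M$), so the claim $M=(\g-1)M$ is equivalent to $H^1(\G^d_{d-1},M)$ being killed in $H^2_{fl}(\calx_{d-1},A[p^\infty])$, which in turn follows once $E_2^{2,0}=H^2(\G^d_{d-1},A[p^\infty](\calf_d))=0$ (true: $\mathrm{cd}_p\,\Z_p=1$) so that there is no incoming differential and no extension problem obstructing the surjection $H^2_{fl}(\calx_{d-1},A[p^\infty])\twoheadrightarrow E_\infty^{1,1}=E_2^{1,1}=M/(\g-1)M$ — but actually we need the reverse: we need $E_\infty^{1,1}$ to receive enough that forces it to be all of $M/(\g-1)M$ with no obstruction, which is automatic because $d_2\colon E_2^{1,1}\to E_2^{3,0}=0$. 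The one genuinely substantive point is therefore that $H^2_{fl}(\calx_{d-1},A[p^\infty])$ must surject onto $M/(\g-1)M$ and that $M/(\g-1)M$ be identified correctly; alternatively, and perhaps more cleanly in the function-field flat-cohomology setting, I would invoke that $H^2_{fl}$ of a function field with $p$-primary divisible coefficients has a concrete description (duality with $A^t[p^\infty]$-points, or Milne's flat duality in \cite{Mi}) making $\g-1$ manifestly surjective. I expect the authors' proof (Lemma \ref{Surjgamma-1}, following \cite[Proposition 4.3.2]{Tan11}) takes precisely this Hochschild–Serre route, using $\mathrm{cd}_p(\Z_p)=1$ as the decisive vanishing.
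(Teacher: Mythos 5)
Your high-level strategy --- Hochschild--Serre for $\calf_d/\calf_{d-1}$ with $\G^d_{d-1}\simeq\Z_p$, and $\mathrm{cd}_p(\Z_p)=1$ as the decisive vanishing --- is essentially the paper's, and once corrected it does give a proof: since $E_2^{2,0}=H^2(\G^d_{d-1},A[p^\infty](\calf_d))=0$ and no differentials touch $E_2^{1,1}$, the term $E_\infty^{1,1}=E_2^{1,1}=H^1(\G^d_{d-1},M)=M/(\g-1)M$ \emph{injects} into $H^2_{fl}(\calx_{d-1},A[p^\infty])$. (You have the direction backwards: because the lower graded piece $E_\infty^{2,0}$ vanishes, $E_\infty^{1,1}$ is a subobject of the abutment, not a quotient, so there is no surjection ``$H^2_{fl}(\calx_{d-1},A[p^\infty])\twoheadrightarrow E_\infty^{1,1}$''.) The lemma therefore reduces to proving $H^2_{fl}(\calx_{d-1},A[p^\infty])=0$.

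That vanishing is exactly where your proposal has a genuine gap: you never establish it, and merely hedge that $H^2_{fl}$ ``has a concrete description \dots making $\g-1$ manifestly surjective.'' It does not follow from $\mathrm{cd}_p(\Z_p)=1$ or any formal spectral-sequence consideration; it needs an actual arithmetic input. The paper gets it from \cite[Lemma 3.3]{GAT12}, which gives $H^2_{fl}(X_K,A)=0$ for finite $K/F$; Kummer theory then identifies $H^2_{fl}(X_K,A[p^m])\simeq H^1_{fl}(X_K,A)/p^m$, and since the transition maps as $m$ grows are multiplication by $p^{m_2-m_1}$, the direct limit $H^2_{fl}(X_K,A[p^\infty])$ is zero --- hence so is $H^2_{fl}(\calx_{d-1},A[p^\infty])$ after a further limit over $K$. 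Without this the argument does not close. Two smaller slips: your assertion that ``all $E_2^{i,j}$ with $i+j=2$, $i\geq 1$ \dots vanish by $p$-cohomological dimension'' is false for $i=1$ --- $E_2^{1,1}$ is precisely the term whose vanishing is the content of the lemma, and it does not vanish for formal reasons; and the detour through $H_1$ and Pontrjagin duality is unnecessary, since for $G\simeq\Z_p$ acting on a discrete $p$-primary module one has $H^1(G,M)\cong M/(\g-1)M$ directly. Finally, the paper's proof runs level-by-level over finite $K$ (introducing auxiliary $\Z_p$-subextensions $K_\infty/K$) and passes to a direct limit at the end; this is largely a matter of rigor, since both the cited vanishing and the $\mathrm{cd}_p$ facts are naturally stated at finite level, and your direct use of the infinite-level spectral sequence with $A[p^\infty]$-coefficients would need a word of justification.
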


\begin{proof} Since
\[ H^1_{fl}(\calx_d,A[p^\infty]) = \dl{K\subset \calf_d\ [K:F]<\infty} \dl{m} H^1_{fl}(X_K,A[p^m]) \  ,\]
an element $\alpha\in H^1_{fl}(\calx_d,A[p^\infty])$ belongs to some $H^1_{fl}(X_K,A[p^m])$.
Now let $\g^{p^{s(K)}}$ be the largest power of $\g$ which acts trivially on $K$, and define a $\Z_p$-extension $K_\infty$ with
$\Gal(K_\infty/K)=\overline{\langle \g^{p^{s(K)}}\rangle}$ and layers $K_n\,$.
Take $t\geq m$, consider the restrictions
\[ H^1_{fl}(X_K,A[p^m]) \rightarrow H^1_{fl}(X_{K_t},A[p^m]) \rightarrow H^1_{fl}(\calx_{K_\infty},A[p^m]) \]
and denote by $x_t$ the image of $x$. Now $x_t$ is fixed by $\Gal(K_t/K)$ and $p^mx_t=0$, so
$x_t$ is in the kernel of the norm $N^{K_t}_K\,$, i.e., $x_t$ belongs to the (Galois) cohomology group
\[ H^1(K_t/K, H^1_{fl}(\calx_{K_\infty},A[p^m]) \iri H^1(K_\infty/K, H^1_{fl}(\calx_{K_\infty},A[p^m])\ .\]
Let $Ker^2_m$ be the kernel of the restriction map
$H^2_{fl}(X_K,A[p^m])\rightarrow H^2_{fl}(\calx_{K_\infty},A[p^m])$, then, from the Hochschild-Serre spectral sequence,
we have
\begin{equation}\label{HochSerKer}
Ker^2_m \rightarrow H^1(K_\infty/K, H^1_{fl}(\calx_{K_\infty},A[p^m]) \rightarrow H^3(K_\infty/K,A(K_\infty)[p^m])=0
\end{equation}
(because the $p$-cohomological dimension of $\Z_p$ is 1). To get rid of $Ker^2_m$ note that,
by \cite[Lemma 3.3]{GAT12}, $H^2_{fl}(X_K,A)=0$. Hence, the cohomology sequence arising from
\[ \xymatrix{ A[p^m] \ar@{^(->}[r] & A  \ar@{->>}^{p^m}[r] & A \ ,} \]
yields an isomorphism $H^2_{fl}(X_K,A[p^m])\simeq H^1_{fl}(X_K,A)/p^m\,$. Consider the commutative diagram (with $m_2\geq m_1\,$)
\[ \xymatrix { H^1_{fl}(X_K,A)/p^{m_1} \ar[r]^\sim \ar[d]_{p^{m_2-m_1}} & H^2_{fl}(X_K,A[p^{m_1}]) \ar[d] \\
H^1_{fl}(X_K,A)/p^{m_2} \ar[r]^\sim & H^2_{fl}(X_K,A[p^{m_2}]) \ .} \]
An element of $H^1_{fl}(X_K,A)/p^{m_1}$ of order $p^r$ goes to zero via the vertical map on the left as soon as
$m_2\geq m_1+r$, hence the direct limit provides ${\displaystyle\dl{m} H^1_{fl}(X_K,A)/p^m =0}$ and, eventually,
${\displaystyle\dl{m} Ker^2_m=0}$ as well. By \eqref{HochSerKer}
\[ 0= \dl{m} H^1(K_\infty/K, H^1_{fl}(\calx_{K_\infty},A[p^m]) = H^1(K_\infty/K, H^1_{fl}(\calx_{K_\infty},A[p^\infty])\ , \]
which yields
\[ H^1_{fl}(\calx_{K_\infty},A[p^\infty])=(\g^{p^{s(K)}}-1)H^1_{fl}(\calx_{K_\infty},A[p^\infty])=
(\g-1)H^1_{fl}(\calx_{K_\infty},A[p^\infty])\ .\]
We get the claim by taking the direct limit on the finite subextensions $K$.
\end{proof}

\begin{thm}\label{PNullThm}
For any $d \geq 3$ we have $Ch_{\L(\calf_{d-1})}( \cals(\calf_d)^{\G^d_{d-1}})=(1)$.
\end{thm}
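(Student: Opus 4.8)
The plan is to use the duality $(\cals(\calf_d)^{\G^d_{d-1}})^\vee\simeq Sel(\calf_d)/(\g-1)$ observed above and to prove that $Sel(\calf_d)/(\g-1)$ is finitely generated over $\Z_p$. This is enough: if $M$ is a finitely generated $\Z_p$-module, then $\L(\calf_{d-1})/\mathrm{Ann}_{\L(\calf_{d-1})}(M)$ embeds in $\End_{\Z_p}(M)$, hence is finitely generated over $\Z_p$, so $\mathrm{Ann}(M)$ has height $\geq d-1\geq 2$ and $M$ is pseudo-null over $\L(\calf_{d-1})=\Z_p[[t_1,\dots,t_{d-1}]]$; thus $Ch_{\L(\calf_{d-1})}(M^\vee)=(1)$.

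First I would identify $Sel(\calf_d)/(\g-1)$ with $\Coker c^d_{d-1}$. As in the proof of Proposition \ref{FinGenSel}, $\Coker b^d_{d-1}\iri H^2(\G^d_{d-1},A[p^\infty](\calf_d))=0$, so $b^d_{d-1}$ is onto $H^1_{fl}(\calx_d,A[p^\infty])^{\G^d_{d-1}}$; and by Lemma \ref{Surjgamma-1}, $H^1_{fl}(\calx_d,A[p^\infty])/(t_d)=0$. Tensoring the defining sequence $0\to Sel(\calf_d)\to H^1_{fl}(\calx_d,A[p^\infty])\to\calg(\calx_d)\to 0$ over $\L(\calf_d)$ with $\L(\calf_d)/(t_d)=\L(\calf_{d-1})$, the associated $\mathrm{Tor}$-sequence (using $\mathrm{Tor}_1^{\L(\calf_d)}(\calg(\calx_d),\L(\calf_{d-1}))=\calg(\calx_d)^{\G^d_{d-1}}$ and the vanishing above) produces a surjection $\calg(\calx_d)^{\G^d_{d-1}}\twoheadrightarrow Sel(\calf_d)/(\g-1)$ with kernel the image of $H^1_{fl}(\calx_d,A[p^\infty])^{\G^d_{d-1}}$ in $\calg(\calx_d)^{\G^d_{d-1}}$. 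By the surjectivity of $b^d_{d-1}$ and the commutativity of diagram \eqref{DiagSel}, that image is exactly $c^d_{d-1}(\calg(\calx_{d-1}))$, so $Sel(\calf_d)/(\g-1)\simeq\Coker c^d_{d-1}$.

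It then remains to bound $\Coker c^d_{d-1}$, and this is the main obstacle. Following the local analysis of \cite{Tan11}, I would note that $c^d_{d-1}$ is induced, at a place $u\mid v$ of $\calf_{d-1}$ (with $w\mid u$ fixed), by $r_u\colon H^1_{fl}(\calx_{d-1,u},A)[p^\infty]\to H^1_{fl}(\calx_{d,w},A)[p^\infty]^{\G^d_{d-1,w}}$. Each $r_u$ is surjective, since $\Coker r_u$ injects by Hochschild--Serre into an $H^2$ of $\G^d_{d-1,w}$, which vanishes as $\G^d_{d-1,w}$ is a closed subgroup of $\G^d_{d-1}\simeq\Z_p$ and so has $p$-cohomological dimension $\leq 1$; and $\Ker r_u\simeq H^1(\G^d_{d-1,w},A(\calf_{d,w}))[p^\infty]$ vanishes for $v\notin S_A$ by \cite[Proposition I.3.8]{Mi} and Lang's theorem (exactly as in Lemma \ref{KercTor}). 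So $r_u$ is an isomorphism for $v\notin S_A$, and applying the snake lemma to the square comparing $c^d_{d-1}$ with the surjective product $\prod_u r_u$ reduces the estimate of $\Coker c^d_{d-1}$ to the places in $S_A$ together with the global (Poitou--Tate) duality over the non-finite base $\calf_{d-1}$. Because $\calf/F$ is totally ramified, each ramified $v$ has $\G^d_{d-1,w}=\G^d_{d-1}$, and by \eqref{e:lemmatan} and the computation of Lemma \ref{KercTor} the corresponding $\calh_v(\calf_d)$ is finitely generated over $\Z_p$, while the finitely many unramified $v\in S_A$ contribute only finite groups; hence the $S_A$-contribution — and therefore $\Coker c^d_{d-1}=Sel(\calf_d)/(\g-1)$ — is finitely generated over $\Z_p$. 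The point needing care is that this really bounds $\Coker c^d_{d-1}$ (the failure of local classes over $\calf_{d-1}$ to glue into global ones), not merely $\Ker c^d_{d-1}$ as in Lemma \ref{KercTor}; in particular one must check that, say, an unramified $v\in S_A$ that splits in $\calf_{d-1}/F$ but not in $\calf_d/\calf_{d-1}$ produces no unbounded $\L(\calf_{d-1})$-torsion contribution.
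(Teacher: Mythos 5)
Your setup is correct and matches the paper in spirit: the duality $(\cals(\calf_d)^{\G^d_{d-1}})^\vee\simeq Sel(\calf_d)/(\g-1)$ together with Lemma~\ref{Surjgamma-1} (i.e., $H^1_{fl}(\calx_d,A[p^\infty])/(\g-1)=0$) does give, by the Tor (= snake lemma) argument, that $Sel(\calf_d)/(\g-1)\simeq\calg(\calx_d)^{\G^d_{d-1}}/Im(\phi^{\G^d_{d-1}})=\Coker c^d_{d-1}$, which is exactly the paper's identification \eqref{QuotSelIso}. The surjectivity of the local $r_u$ via $H^2(\G^d_{d-1,w},A(\calf_{d,w}))[p^\infty]=0$ is also a real and correct ingredient (the paper uses the same $cd_p(\Z_p)\leq 1$ vanishing for $\calh^2(\G^d_{d-1},A)$).

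The gap is in the last step, where you conclude that the ``$S_A$-contribution'' bounds $\Coker c^d_{d-1}$. Write the commutative diagram with rows $\calg(\calx_{d-1})\iri\calh^1(\calx_{d-1},A)\sri\Coker(\phi_{d-1})$ and $\calg(\calx_d)^{\G^d_{d-1}}\iri\calh^1(\calx_d,A)^{\G^d_{d-1}}\sri\Coker(\phi)^{\G^d_{d-1}}$, with vertical maps $c^d_{d-1}$, $\prod_u r_u$ and the induced map. Since $\prod_u r_u$ is surjective, the snake lemma gives an exact sequence
\[
\Ker\Bigl(\prod_u r_u\Bigr)\longrightarrow \Ker\bigl(\Coker(\phi_{d-1})\to\Coker(\phi)^{\G^d_{d-1}}\bigr)\longrightarrow \Coker c^d_{d-1}\longrightarrow 0\ .
\]
So $\Coker c^d_{d-1}$ is a subquotient of $\Coker(\phi_{d-1})$, not of $\Ker(\prod_u r_u)$. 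Your discussion of ramified/unramified $v\in S_A$ controls $\Ker(\prod_u r_u)$ (exactly as in Lemma~\ref{KercTor}), but this is on the wrong side of the sequence: it can only kill part of the kernel, not produce the needed bound on $\Coker c^d_{d-1}$. What must be bounded is the \emph{global} obstruction $\Coker(\phi_{d-1})$, i.e., the failure of local classes over the infinite base $\calf_{d-1}$ to come from global ones. You gesture at ``Poitou--Tate over the non-finite base'' but never carry it out; this is precisely the hard step. The paper handles it by factoring $\phi_{d-1}$ through the Kummer map and applying the generalized Cassels--Tate dual exact sequence of \cite{GAT07}: over each finite layer $K$, $\Coker(\lambda_K)^\vee\simeq T_p(Sel_{A^t}(K)_p)$, and passing to the limit one gets that $\Coker(\lambda_{d-1})^\vee$ embeds into $\il{K}\il{n} A^t(K)[p^\infty]/p^n$, a finitely generated $\Z_p$-module, hence pseudo-null over $\L(\calf_{d-1})$ for $d\geq 3$. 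Without this (or an equivalent duality input) the argument does not close. The parenthetical remark about an unramified $v\in S_A$ splitting in $\calf_{d-1}/F$ but not in $\calf_d/\calf_{d-1}$ is irrelevant here; that phenomenon is what produces the $(p^\nu)$ factor in Theorem~\ref{CharIdCoker}, not an obstruction to Theorem~\ref{PNullThm}.
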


\begin{proof} Consider the following diagram
\begin{equation}\label{DiagTan}
\xymatrix { Sel(\calf_d) \ar@{^(->}[r] \ar[d]^{\g-1} &
H^1_{fl}(\calx_d,A[p^\infty]) \ar[r]^{\phi} \ar@{->>}[d]^{\g-1} &
\calh^1(\calx_d,A) \ar@{->>}[r] \ar[d]^{\g-1} &
Coker(\phi) \ar[d]^{\g-1} \\
Sel(\calf_d) \ar@{^(->}[r] &
H^1_{fl}(\calx_d,A[p^\infty]) \ar[r]^{\phi}  &
\calh^1(\calx_d,A) \ar@{->>}[r]  &
Coker(\phi)  } \end{equation}
(where $\calh^i(\calx_d,A):=\displaystyle{\prod_w H^i_{fl}(\calx_{d,w},A)[p^\infty]}$ and the surjectivity of the second vertical
arrow comes from the previous lemma). Inserting $\calg(\calf_d):=Im(\phi)$, we get two diagrams
\begin{equation}\label{DiagTan1}
\xymatrix { Sel(\calf_d) \ar@{^(->}[r] \ar[d]^{\g-1} &
H^1_{fl}(\calx_d,A[p^\infty]) \ar@{->>}[r]^{\qquad\phi} \ar@{->>}[d]^{\g-1} &
\calg(\calf_d) \ar@{->>}[d]^{\g-1} \\
Sel(\calf_d) \ar@{^(->}[r] &
H^1_{fl}(\calx_d,A[p^\infty]) \ar@{->>}[r]^{\qquad\phi}  &
\calg(\calf_d) }
\xymatrix { \calg(\calf_d) \ar@{^(->}[r] \ar@{->>}[d]^{\g-1} &
\calh^1(\calx_d,A) \ar@{->>}[r] \ar[d]^{\g-1} &
Coker(\phi) \ar[d]^{\g-1} \\
\calg(\calf_d) \ar@{^(->}[r] &
\calh^1(\calx_d,A) \ar@{->>}[r]  &
Coker(\phi)  \ .} \end{equation}
From the snake lemma sequence of the first one, we obtain the isomorphism
\begin{equation}\label{QuotSelIso}
\mathcal{G}(\calf_d)^{\G^d_{d-1}}/Im(\phi^{\G^d_{d-1}}) \simeq Sel(\calf_d)/(\g-1)
\end{equation}
(where $\phi^{\G^d_{d-1}}$ is the restriction of $\phi$ to $H^1_{fl}(\calx_d,A[p^\infty])^{\G^d_{d-1}}\,$).
The snake lemma sequence of the second diagram (its ``upper'' row) yields an isomorphism
\begin{equation}\label{CokerphiIso}
\calh^1(\calx_d,A)^{\G^d_{d-1}}/\mathcal{G}(\calf_d)^{\G^d_{d-1}} \simeq Coker(\phi)^{\G^d_{d-1}} \ .
\end{equation}
The injection $\mathcal{G}(\calf_d)^{\G^d_{d-1}} \iri \calh^1(\calx_d,A)^{\G^d_{d-1}}$ induces an exact sequence
\[ \mathcal{G}(\calf_d)^{\G^d_{d-1}}/Im(\phi^{\G^d_{d-1}}) \iri \calh^1(\calx_d,A)^{\G^d_{d-1}}/Im(\phi^{\G^d_{d-1}})
\sri \calh^1(\calx_d,A)^{\G^d_{d-1}}/\mathcal{G}(\calf_d)^{\G^d_{d-1}}  \]
(with a little abuse of notation we are considering $Im(\phi^{\G^d_{d-1}})$ as a submodule of
$\calh^1(\calx_d,A)^{\G^d_{d-1}}$ via the natural injection above)
which, by \eqref{QuotSelIso} and \eqref{CokerphiIso}, yields the sequence
\begin{equation}\label{Tan42}
Sel(\calf_d)/(\g-1) \iri Coker(\phi^{\G^d_{d-1}}) \sri Coker(\phi)^{\G^d_{d-1}} \ .
\end{equation}
\noindent Now consider the following diagram
\[ \xymatrix {
H^1(\G^d_{d-1},A[p^\infty]) \ar@{^(->}[r] \ar[d]^{\phi^d_{d-1}} &
H^1_{fl}(\calx_{d-1},A[p^\infty]) \ar[r] \ar[d]^{\phi_{d-1}} &
H^1_{fl}(\calx_d,A[p^\infty])^{\G^d_{d-1}} \ar[r] \ar[d]^{\phi^{\G^d_{d-1}}} & 0 \ar[d]\\
\calh^1(\G^d_{d-1},A) \ar@{^(->}[r] &
\calh^1(\calx_{d-1},A) \ar[r] &
\calh^1(\calx_d,A)^{\G^d_{d-1}} \ar@{->>}[r] &
\calh^2(\G^d_{d-1},A)  } \]
where:\begin{itemize}
\item the vertical maps are all induced by the product of restrictions $\phi$;
\item the horizontal lines are just the Hochschild-Serre sequences for global and local cohomology;
\item the 0 in the upper right corner comes from $H^2(\G^d_{d-1},A[p^\infty]) = 0$;
\item the surjectivity on the lower right corner comes from $\calh^2(\calx_{d-1},A) = 0$, which is
a direct consequence of \cite[Theorem III.7.8]{Mi}.
\end{itemize}

\noindent This yields a sequence (from the snake lemma)
\begin{equation} \label{EqCoker} Coker(\phi_{d-1}) \rightarrow Coker(\phi^{\G^d_{d-1}}) \rightarrow  \calh^2(\G^d_{d-1},A)=
\prod_w H^2(\G^d_{d-1,w},A(\calf_{d,w}))[p^\infty] \ .\end{equation}

\noindent{\bf The module $Coker(\phi_{d-1})$.}
The Kummer map induces a surjection $H^1(\calx_{d-1},A[p^\infty])\sri H^1(\calx_{d-1},A)[p^\infty]$
which fits in the diagram
\[ \xymatrix{ H^1(\calx_{d-1},A[p^\infty]) \ar@{->>}[d] \ar[r]^{\quad\phi_{d-1}} & \calh^1(\calx_{d-1},A) \\
H^1(\calx_{d-1},A)[p^\infty] \ar[ur]_{\lambda_{d-1}} & & \ .}  \]
This induces natural surjective maps $Im(\phi_{d-1})\sri Im(\lambda_{d-1})$ and, eventually,
$Coker(\lambda_{d-1})\sri Coker(\phi_{d-1})$.
For any finite extension $K/F$ we have a similar map
\[ \lambda_K : H^1(X_K,A)[p^\infty] \rightarrow \calh^1(X_K,A) \]
whose cokernel verifies
\[ Coker(\lambda_K)^\vee \simeq T_p(Sel_{A^t}(K)_p) \]
(by \cite[Main Theorem]{GAT07}), where $A^t$ is the dual abelian variety of $A$. Moreover there is an embedding
of $T_p(Sel_{A^t}(K)_p)$ into the $p$-adic completion of $\calh^0(X_K,A^t)$ (recall that, by Tate local duality,
$A^t(K_v)=H^0(K_v,A^t)$ is the Pontrjagin dual of $H^1(K_v,A)$, see \cite[Theorem III.7.8]{Mi}).
Taking limits on all the finite subextensions of $\calf_{d-1}$ we find similar relations
\[ Coker(\lambda_{d-1})^\vee \simeq T_p(Sel_{A^t}(\calf_{d-1})_p) \iri \il{K}\il{n} \calh^0(X_K,A^t)/p^n
= \il{K}\il{n} A^t(K)[p^\infty]/p^n \ .\]
Hence $Coker(\lambda_{d-1})^\vee$ embeds into a finitely generated $\Z_p$-module, i.e., it is
$\L(\calf_{d-1})$ pseudo-null for any $d\geq 3$.

\noindent{\bf The modules $H^2(\G^d_{d-1,w},A(\calf_{d,w}))[p^\infty]$.}
If the prime splits completely in $\calf_d/\calf_{d-1}\,$, then obviously $H^2(\G^d_{d-1,w},A(\calf_{d,w}))[p^\infty]=0$.
If the place is ramified or inert, then $\G^d_{d-1,w} \simeq \Z_p\,$.
Consider the exact sequence
\[ \xymatrix{ A(\calf_{d,w})[p]\, \ar@{^(->}[r]  & A(\calf_{d,w}) \ar@{->>}[r]^{p}  & pA(\calf_{d,w}) } \ ,\]
which yields a surjection
\[ \xymatrix{ H^2(\G^d_{d-1,w},A(\calf_{d,w})[p]) \ar@{->>}[r]  & H^2(\G^d_{d-1,w},A(\calf_{d,w}))[p] \ .} \]
The module on the left is trivial because $cd_p(\Z_p)=1$, hence $H^2(\G^d_{d-1,w},A(\calf_{d,w}))[p]=0$
and this yields $H^2(\G^d_{d-1,w},A(\calf_{d,w}))[p^\infty]=0$ as well.

\noindent The sequence \eqref{EqCoker} implies that $Coker(\phi^{\G^d_{d-1}})$ is $\L(\calf_{d-1})$ pseudo-null for $d\geq 3$
and, by \eqref{Tan42}, we get $Sel(\calf_d)/(\g-1)$ is pseudo-null as well. Therefore
\[ Ch_{\L(\calf_{d-1})}( \cals(\calf_d)^{\G^d_{d-1}}) = Ch_{\L(\calf_{d-1})}((Sel(\calf_d)/(\g-1))^\vee) = (1)\ .\]
\end{proof}

A direct consequence of equation \eqref{CharIdSel2} and Theorems \ref{CharIdCoker} and \ref{PNullThm} is

\begin{cor}\label{CorTan} Assume $\calf/F$ is a totally ramified extension, then, for any $d\gg 0$
and any $\Z_p$-subextension $\calf_d/\calf_{d-1}\,$, one has
\begin{equation}\label{CharIdSel5}
\pi^d_{d-1} (Ch_{\L(\calf_d)}(\cals(\calf_d))) =
(p^\nu)\cdot Ch_{\L(\calf_{d-1})}(\cals(\calf_{d-1})) \ .
\end{equation}
Moreover if no unramified places $v\in S_A$ splits completely in $\calf$, then
\begin{equation}\label{CharIdSel4}
\pi^d_{d-1} (Ch_{\L(\calf_d)}(\cals(\calf_d))) = Ch_{\L(\calf_{d-1})}(\cals(\calf_{d-1})) \ .
\end{equation}
\end{cor}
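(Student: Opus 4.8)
The plan is to assemble Corollary \ref{CorTan} directly from the machinery already in place. The starting point is equation \eqref{CharIdSel2}, which, valid for every $d>\max\{2,e\}$, reads
\[ Ch_{\L(\calf_{d-1})}(\cals(\calf_d)^{\G^d_{d-1}})\cdot\pi^d_{d-1}(Ch_{\L(\calf_d)}(\cals(\calf_d))) =
Ch_{\L(\calf_{d-1})} (\cals(\calf_{d-1}))\cdot Ch_{\L(\calf_{d-1})}((\Coker a^d_{d-1})^\vee)\,. \]
Under Assumption \ref{AssRam} we are allowed to invoke the two main results of this section. First, Theorem \ref{PNullThm} tells us that the leftmost factor is trivial, $Ch_{\L(\calf_{d-1})}(\cals(\calf_d)^{\G^d_{d-1}})=(1)$, for all $d\geq 3$; this is exactly hypothesis {\bf 1} of Theorem \ref{IntroThmBBL2}. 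Second, Theorem \ref{CharIdCoker} identifies the extra factor on the right: for $d\gg 0$ there is a $\nu\geq 0$ with $Ch_{\L(\calf_{d-1})}((\Coker a^d_{d-1})^\vee)=(p^\nu)$, and $\nu=0$ as soon as no unramified place $v\in S_A$ splits completely in $\calf$. Substituting these two evaluations into \eqref{CharIdSel2} immediately gives \eqref{CharIdSel5}, and its sharper form \eqref{CharIdSel4} in the split-free case.

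In writing this out I would first fix $d\gg 0$ large enough that simultaneously $d>\max\{2,e\}$ (so that \eqref{CharIdSel2} holds and $\cals(\calf_d)$, $\cals(\calf_{d-1})$ are finitely generated torsion over their respective Iwasawa algebras by Proposition \ref{FinGenSel}) and that the conclusion of Theorem \ref{CharIdCoker} applies. Then I would note that Assumption \ref{AssRam} guarantees that no ramified prime has trivial decomposition group in $\Gal(\calf_1/F)$ — indeed every ramified prime is totally ramified — so the running hypothesis of Section \ref{SelGrSec} (under which \eqref{CharIdSel2} was derived) is automatically satisfied and no appeal to Lemma \ref{RamLemma} is needed; any $\Z_p$-subextension $\calf_d/\calf_{d-1}$ may be used. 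With the two theorems in hand the computation is a one-line substitution in $\L(\calf_{d-1})$.

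There is really no serious obstacle here: the corollary is a formal consequence of \eqref{CharIdSel2}, Theorem \ref{PNullThm}, and Theorem \ref{CharIdCoker}, all of which are already established. The only point requiring a word of care is bookkeeping of the range of $d$ — one must take $d$ large enough for \emph{all three} inputs to be in force, i.e. $d\gg 0$ in the sense needed by Theorem \ref{CharIdCoker}, which in particular forces $d\geq 3$ as required by Theorem \ref{PNullThm} and, after possibly enlarging, $d>e$ as required by Proposition \ref{FinGenSel}. Once that is observed, \eqref{CharIdSel5} and \eqref{CharIdSel4} follow verbatim.

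\begin{proof}
By Assumption \ref{AssRam} every ramified prime of $\calf/F$ is totally ramified, hence has nontrivial decomposition group in $\Gal(\calf_1/F)$ for any choice of $\calf_1$; thus the hypothesis of Section \ref{SelGrSec} holds automatically and equation \eqref{CharIdSel2} is available for every $d>\max\{2,e\}$, with $e$ as in Assumption \ref{AssSel}. Fix $d\gg 0$, large enough that in addition the conclusion of Theorem \ref{CharIdCoker} applies; in particular $d\geq 3$. By Theorem \ref{PNullThm}, $Ch_{\L(\calf_{d-1})}(\cals(\calf_d)^{\G^d_{d-1}})=(1)$, and by Theorem \ref{CharIdCoker} there is $\nu\geq 0$ with $Ch_{\L(\calf_{d-1})}((\Coker a^d_{d-1})^\vee)=(p^\nu)$. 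Substituting these two identities into \eqref{CharIdSel2} gives
\[ \pi^d_{d-1}(Ch_{\L(\calf_d)}(\cals(\calf_d))) = (p^\nu)\cdot Ch_{\L(\calf_{d-1})}(\cals(\calf_{d-1}))\,, \]
which is \eqref{CharIdSel5}. If moreover no unramified place $v\in S_A$ splits completely in $\calf$, then Theorem \ref{CharIdCoker} gives $Ch_{\L(\calf_{d-1})}((\Coker a^d_{d-1})^\vee)=(1)$, i.e. $\nu=0$, and \eqref{CharIdSel5} becomes \eqref{CharIdSel4}.
\end{proof}
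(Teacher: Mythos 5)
Your proof is correct and coincides exactly with the paper's: the authors state the corollary as "a direct consequence of equation \eqref{CharIdSel2} and Theorems \ref{CharIdCoker} and \ref{PNullThm}," which is precisely the substitution you carry out. The only thing you add is the (helpful but routine) bookkeeping that $d$ must be taken large enough for all three inputs to be in force and that Assumption \ref{AssRam} automatically supplies the running hypothesis of Section \ref{SelGrSec}.
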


\noindent In any case the modules $\cals(\calf_d)$ verify the hypotheses of Theorem \ref{IntroThmBBL2} (because
of Proposition \ref{FinGenSel} and Theorem \ref{PNullThm}), so we can define

\begin{df}\label{CharIdSel}
The \emph{pro-characteristic ideal} of $\cals(\calf)$ is
\[ \wt{Ch}_{\L(\calf)}(\cals(\calf)):=
\il{d}\, (\pi^{\L(\calf)}_{\L(\calf_d)})^{-1}(Ch_{\Lambda(\calf_d)}(\cals(\calf_d)))\ \]
where $\pi^{\L(\calf)}_{\L(\calf_d)}:\L(\calf)\rightarrow\Lambda(\calf_d)$ is the natural projection map.
\end{df}

\noindent We remark that Definition \ref{CharIdSel} only depends on the extension $\calf/F$ and not on the filtration of
$\Z_p^d$-extension we choose inside it. Indeed with two different filtrations $\{\calf_d\,\}$ and $\{\calf'_d\,\}$
we can define a third one by putting
\[ \calf''_0:=F\quad{\rm and}\quad \calf''_n=\calf_n\calf'_n \quad \forall\,n\geq 1 \ .\]
By Corollary \ref{CorTan}, the limits of the characteristic ideals of the filtrations we started with coincide
with the limit on the filtration $\{\calf''_n\,\}$ (see \cite[Remark 3.11]{BBL2} for an analogous statement
for characteristic ideals of class groups).

This pro-characteristic ideal could play a role in the Iwasawa Main Conjecture (IMC) for a
totally ramified extension of $F$ as the algebraic counterpart of a $p$-adic $L$-function associated to $A$ and $\calf$
(see \cite[Section 5]{BL} or \cite[Section 3]{BBL} for similar statements but with Fitting ideals).
Anyway, at present, the problem of formulating a (conjectural) description of this ideal in terms of a natural
$p$-adic $L$-functions (i.e., a general non-noetherian Iwasawa Main Conjecture) is still wide open.
However, we can say something if $A$ is already defined over the constant field of $F$.

\begin{thm}\label{FinalThm}{\bf [Non-noetherian IMC for constant abelian varieties]}
Assume $A/F$ is a constant abelian variety and let $\calf/F$ be a totally ramified extension as above. Then there exists an element
$\theta_{A,\calf}$ interpolating the classical $L$-function $L(A,\chi,1)$ (where $\chi$ varies among characters of $\Gal(\calf/F)$)
such that one has an equality of ideals in $\L(\calf)$
\begin{equation} \label{e:lltt}
\wt{Ch}_{\L(\calf)}(\cals(\calf)) = (\theta_{A,\calf})\ . \end{equation}
\end{thm}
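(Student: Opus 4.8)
The plan is to reduce the non-noetherian Main Conjecture to the noetherian ones at each finite layer $\calf_d/F$ and then take the limit, using Corollary \ref{CorTan} to guarantee compatibility. First I would invoke the deep result of Lai--Longhi--Tan--Trihan \cite{LLTT}: for each $d$ (with $d$ large enough that $\cals(\calf_d)$ is $\L(\calf_d)$-torsion, which holds by Proposition \ref{FinGenSel} once $d>e$), there is a $p$-adic $L$-function $\theta_{A,\calf_d}\in\L(\calf_d)$ interpolating the special values $L(A,\chi,1)$ for $\chi$ ranging over characters of $\Gal(\calf_d/F)$, and the noetherian IMC holds: $Ch_{\L(\calf_d)}(\cals(\calf_d))=(\theta_{A,\calf_d})$. (Here one needs that the hypotheses of \cite{LLTT} — essentially that $A$ is constant, or at least has good reduction everywhere, together with the torsionness already established — are met; this is where the constancy hypothesis on $A$ is used.)

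Next I would establish the compatibility of the $\theta_{A,\calf_d}$ under the projections $\pi^d_{d-1}\colon\L(\calf_d)\to\L(\calf_{d-1})$. The interpolation property pins down $\theta_{A,\calf_d}$ up to a unit, and the characters of $\Gal(\calf_{d-1}/F)$ are exactly the characters of $\Gal(\calf_d/F)$ trivial on $\G^d_{d-1}$, so $\pi^d_{d-1}(\theta_{A,\calf_d})$ and $\theta_{A,\calf_{d-1}}$ have the same interpolation values at all characters of $\Gal(\calf_{d-1}/F)$; this forces $\pi^d_{d-1}(\theta_{A,\calf_d})=\theta_{A,\calf_{d-1}}$ up to a unit — and in fact, since the $Ch$'s are honest ideals, Corollary \ref{CorTan} tells us $\pi^d_{d-1}((\theta_{A,\calf_d}))=(p^\nu)(\theta_{A,\calf_{d-1}})$. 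When no unramified $v\in S_A$ splits completely in $\calf$ we have $\nu=0$ and the compatibility is exact on the nose; in general one absorbs the $(p^\nu)$ factor into the definition of $\theta_{A,\calf}$ or checks that it is forced to be trivial by the known behaviour of the analytic side (the $L$-values do not acquire extra $p$-powers under the projection). Either way, the system $\{(\theta_{A,\calf_d})\}_d$ is compatible with $\{\pi^d_{d-1}\}$ in exactly the sense required to form the inverse limit.

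Then I would simply feed this into Definition \ref{CharIdSel}: since $Ch_{\L(\calf_d)}(\cals(\calf_d))=(\theta_{A,\calf_d})$ for all $d\gg 0$ and the preimages $(\pi^{\L(\calf)}_{\L(\calf_d)})^{-1}(\theta_{A,\calf_d})$ form a decreasing chain of ideals of $\L(\calf)$ whose intersection is by definition $\wt{Ch}_{\L(\calf)}(\cals(\calf))$, one defines $\theta_{A,\calf}$ as the corresponding element (or ideal) $\il{d}\,(\pi^{\L(\calf)}_{\L(\calf_d)})^{-1}(\theta_{A,\calf_d})$ in $\L(\calf)$. The compatibility from the previous paragraph is precisely what makes $(\theta_{A,\calf})$ well-defined as an ideal of the non-noetherian ring $\L(\calf)$, and the interpolation property at characters of $\Gal(\calf/F)$ follows because any such character factors through some $\Gal(\calf_d/F)$. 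Equation \eqref{e:lltt} is then immediate from the layer-by-layer equality.

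The main obstacle I anticipate is the bookkeeping around the factor $(p^\nu)$ in \eqref{CharIdSel5}: one must check either that it does not actually arise for constant $A$ in the relevant extensions (e.g. because the $p$-parts of the relevant component groups or of $Sel_{A^t}$ are controlled, or because the only unramified split primes contribute trivially), or else that the analytic $\theta_{A,\calf_d}$ carry a matching $p$-power so that the two sides of the IMC remain equal under projection. A secondary point requiring care is verifying that the hypotheses needed to apply \cite{LLTT} at every layer $\calf_d$ (not merely at $F$) are genuinely satisfied — in particular the torsion statement, which is handled by Proposition \ref{FinGenSel}, and any finiteness or reduction hypotheses, which should follow from $A$ being constant. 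Once these are pinned down, the passage to the limit is formal.
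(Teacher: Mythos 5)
Your overall strategy coincides with the paper's: apply \cite[Theorem 1.3]{LLTT} at each finite layer $\calf_d$ to obtain $Ch_{\L(\calf_d)}(\cals(\calf_d)) = (\theta_{A,\calf_d})$, check the system $\{(\theta_{A,\calf_d})\}_d$ is compatible under the projections $\pi^d_{d-1}$, and conclude by taking the inverse limit against Definition \ref{CharIdSel}.

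Two points, however, deserve tightening. First, you attempt to \emph{derive} the compatibility $\pi^d_{d-1}(\theta_{A,\calf_d})=\theta_{A,\calf_{d-1}}$ from the interpolation property. As you yourself observe, interpolation pins down $\theta_{A,\calf_d}$ only up to a unit, so this argument cannot produce an exact (not merely up-to-units) identity of elements. In fact, the paper does not need to argue this at all: the $\theta_{A,L}$ of \cite[Section 7.2.1]{LLTT} are built as a projective system, so $\pi^d_{d-1}(\theta_{A,\calf_d})=\theta_{A,\calf_{d-1}}$ holds by construction, and the interpolation formula of \cite[Theorem 7.3.1]{LLTT} is a property established afterwards rather than the defining datum.

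Second, and more importantly, your handling of the potential $(p^\nu)$ factor from \eqref{CharIdSel5} is circuitous and ultimately inconclusive (you suggest absorbing it into $\theta_{A,\calf}$ or appealing to control of component groups or of $Sel_{A^t}$). The clean point is simply that a constant abelian variety has good reduction everywhere, so $S_A$ consists \emph{only} of the (totally) ramified places of $\calf/F$ and contains no unramified places whatsoever. The hypothesis of the second part of Theorem \ref{CharIdCoker} — ``no unramified $v\in S_A$ splits completely in $\calf$'' — is thus vacuously satisfied, and \eqref{CharIdSel4} (the exact version, $\nu=0$) holds. This removes the obstacle you flagged and makes the passage to the limit immediate: the projective system $\{(\theta_{A,\calf_d})\}_d$ is literally equal to $\{Ch_{\L(\calf_d)}(\cals(\calf_d))\}_d$ at every layer, and the limit of both sides gives \eqref{e:lltt}.
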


\begin{proof}
This is a simple consequence of \cite[Theorem 1.3]{LLTT}. Namely, the element $\theta_{A,L}$ is defined in \cite[Section 7.2.1]{LLTT}
for any abelian extension $L/F$ unramified outside a finite set of places. It satisfies
$\pi^d_{d-1}(\theta_{A,\calf_d})=\theta_{A,\calf_{d-1}}$ by construction and the interpolation formula (too complicated
to report it here) is proved in \cite[Theorem 7.3.1]{LLTT}. Since $A$ has good reduction everywhere, \eqref{CharIdSel5}
always holds, so both sides of \eqref{e:lltt} are defined. Finally \cite[Theorem 1.3]{LLTT} proves that
$Ch_{\L(\calf_d)}(\cals(\calf_d)) = (\theta_{A,\calf_d})$ for all $d$ and \eqref{e:lltt} follows by just taking a limit.
\end{proof}

\end{document}